\newcommand{\GV}{\Gamma_V}
\newtheorem{thm}{Theorem}[section]
\newtheorem{theorem}[thm]{Theorem}
\newtheorem*{theoremN}{Theorem}
\newtheorem{lemma}[thm]{Lemma}
\newtheorem{prop}[thm]{Proposition}
\newtheorem{rmk}[thm]{Remark}
\newtheorem*{rmkN}{Remark}
\title{Involutions on a surface of general type with $p_{g}=q=0$, $K^{2}=7$}
\author{Yongnam Lee and YongJoo Shin}
\date{}
\address{Department of Mathematics, Sogang University,
         Sinsu-dong, Mapo-gu, Seoul 121-742,  Korea}
\email{ynlee@sogang.ac.kr}
\address{Department of Mathematics, Sogang University,
         Sinsu-dong, Mapo-gu, Seoul 121-742, Korea}
\email{haushin@sogang.ac.kr}
\subjclass[2010]{Primary 14J29}
\begin{document}
\maketitle
\begin{abstract}In this paper we study involutions on minimal surfaces of general type
with $p_g=q=0$ and $K^2=7$. We focus on the classification of the
birational models of the quotient surfaces and their branch divisors
induced by an involution.
\end{abstract}

\section{Introduction}
Algebraic surfaces of general type with vanishing geometric genus
have a very old history and have been studied by many
mathematicians. Since there are too many to mention here, we refer
a very recent survey \cite{BCP}. Nonetheless, a classification is
still lacking and it can be considered one of the most difficult
current problems in the theory of algebraic surfaces.

In the 1930s Campedelli \cite{SAP} constructed the first example of
a minimal surface of general type with $p_g=0$ using a double cover.
He used a double cover of $\mathbb{P}^2$ branched along a degree 10
curve with six points, not lying on a conic, all of which are a
triple point with another infinitely near triple point. After his
construction, the covering method has been one of main tools for
constructing new surfaces.

Surfaces of general type with $p_g=q=0, K^2=1$, and with an
involution have studied by Keum and the first named author \cite{FIG}, and
completed later by Calabri, Ciliberto and Mendes Lopes \cite{NGI}.
Also surfaces of general type with $p_g=q=0, K^2=2$, and with an
involution have studied by Calabri, Mendes Lopes, and Pardini
\cite{CMP}. Previous studies motivate the study of surfaces of
general type with $p_g=q=0, K^2=7$, and with an involution.

We know that a minimal surface of general type with $p_g=q=0$
satisfies $1\le K^{2}\le9$. One can ask whether there is a minimal
surface of general type with $p_{g}=q=0,$ and with an involution
whose quotient is birational to an Enriques surface. Indeed, there
are examples that are minimal surfaces of general type with
$p_{g}=q=0$, and $K^{2}=1,2,3,4$ constructed by a double cover of an
Enriques surface in \cite{Keum}, \cite{FIG}, \cite{ANFS},
\cite{SEC}. On the other hand, there is no a minimal surface of
general type with $p_{g}=q=0$ and $K^{2}=9$ (resp. $8$) having an
involution whose quotient is birational to an Enriques surface by
Theorem $4.3$ (resp. $4.4$) in \cite{RSMN}. Therefore, it is worth
to classify the possible branch divisors and to find an example
whose quotient is birational to an Enriques surface in the cases
$K^2=5,6,7$. We focus on the classification of branch divisors
induced by an involution instead of finding examples. We have only
two possible cases by excluding all other cases. Precisely, we prove
the following in Section 4.

\begin{theoremN} Let $S$ be a minimal surface of general type
with $p_{g}(S)=q(S)=0$, $K_{S}^{2}=7$ having an involution $\sigma$.
Suppose that the quotient $S/\sigma$ is birational to an Enriques
surface. Then the number of fixed points is 9, and the fixed divisor is a curve of genus 3 or
consists of two curves of genus 1 and 3. Furthermore, $S$
has a 2-torsion element.
\end{theoremN}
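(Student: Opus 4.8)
The plan is to run the standard resolution-of-the-quotient analysis and then impose $\kappa(S/\sigma)=0$. The fixed locus of $\sigma$ is a disjoint union of a smooth curve $R$ and $k$ isolated points; blowing up the isolated points gives $\epsilon\colon \widehat S\to S$, with exceptional curves $E_1,\dots,E_k$, on which $\sigma$ lifts to an involution with smooth fixed divisor, and I take $\widehat\pi\colon\widehat S\to W$ to be the induced smooth double cover of the minimal resolution $W$ of $S/\sigma$. Its branch divisor is $B=\overline R+\sum_{i=1}^{k}N_i$, where $\overline R\cong R$ and the $N_i$ are the disjoint $(-2)$-curves over the $k$ nodes, with $\widehat\pi(E_i)=N_i$; writing $B\equiv 2L$ I record $K_{\widehat S}=\widehat\pi^{*}(K_W+L)$, $\chi(\mathcal O_{\widehat S})=2\chi(\mathcal O_W)+\tfrac12 L(K_W+L)$, $K_{\widehat S}^{2}=2(K_W+L)^{2}$ and $e(\widehat S)=2e(W)-e(B)$, together with $\chi(\mathcal O_{\widehat S})=1$, $K_{\widehat S}^{2}=7-k$ and $e(\widehat S)=5+k$.

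Next I would exploit that $W$ is birational to an Enriques surface. Writing $\phi\colon W\to T$ for the contraction onto the Enriques minimal model, one has $\chi(\mathcal O_W)=1$, $K_W^{2}=-b$, $e(W)=12+b$ and $K_W\equiv\sum_{j=1}^{b}\mathcal E_j$ numerically, where $b$ is the number of blow-downs and $K_T$ is numerically trivial. The $\chi$-identity gives $L(K_W+L)=-2$, and feeding this into the $K^2$- and $e$-identities expresses everything in terms of $(k,b)$: $K_W\cdot L=(11-k+2b)/2$, $L^{2}=(k-15-2b)/2$, $K_W\cdot\overline R=11-k+2b$, $\overline R^{2}=4k-30-4b$, hence $e(\overline R)=19+2b-3k$ and $p_a(\overline R)=(3k-17-2b)/2$. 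Pushing down to $S$ via $\widehat\pi^{*}\overline R=2\widetilde R$, where $\widetilde R$ is the reduced ramification over $\overline R$ (and $R$ avoids the blown-up points), yields $K_S\cdot R=k-4$ and $R^{2}=2k-15-2b$.

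Now I would collect the constraints. The identity $7-k=2(K_W+L)^{2}$ forces $k$ odd; $K_S$ nef gives $K_S\cdot R=k-4\ge0$; $p_a(\overline R)\ge0$ gives $3k-2b\ge17$; numerical triviality of $K_T$ gives $K_W\cdot L=\sum_j L\cdot\mathcal E_j$ with each summand $\ge0$; and the $k$ disjoint $(-2)$-curves $N_i$ span a negative-definite sublattice of $\mathrm{Num}(W)$, which has signature $(1,9+b)$, so $k\le 9+b$. In particular $b=0$ is impossible, since then $K_W\cdot L=0$ would force $k=11>9$. Combining these with the effectivity of $\overline R$, with $h^{0}(K_W+L)=0$ (from $p_g(S)=0$), and with the pluricanonical constraint $h^{0}(2(K_W+L))+h^{0}(2K_W+L)=P_2(S)=8$, I expect to cut the admissible pairs down to the single solution $(k,b)=(9,2)$, for which $e(\overline R)=-4$, $p_a(\overline R)=3$, $K_S\cdot R=5$ and $R^{2}=-1$. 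Finally the Euler number $e(\overline R)=-4$ is realised either by a smooth irreducible curve of genus $3$ or by various disjoint unions; minimality of $S$ (a rational fixed component would be a smooth rational curve of negative self-intersection on $S$, and tracking it through the cover contradicts $p_g(S)=0$) together with connectedness considerations excludes every other splitting, leaving the smooth irreducible genus-$3$ curve and the disjoint union of curves of genera $1$ and $3$. I expect this elimination to be the main obstacle: the numerical identities by themselves admit many formal solutions $(k,b)$, and discarding all but $(9,2)$ requires the finer geometry of the blown-up Enriques surface — its effective and nef cones, the admissible configurations of the $N_i$ and of the $(-1)$-curves $\mathcal E_j$ relative to $B$, and the precise translation of the minimality of $S$.

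For the $2$-torsion I would transport the Enriques $2$-torsion $K_T$ to $S$. Since $K_T$ is numerically trivial, $\eta:=\phi^{*}K_T$ satisfies $\widehat\pi^{*}\eta\cdot E_i=\eta\cdot N_i=0$ for every $i$, so $\widehat\pi^{*}\eta=\epsilon^{*}\eta_S$ descends to a class $\eta_S\in\mathrm{Pic}(S)$, and $2\eta_S\sim0$ because $2\eta=\phi^{*}(2K_T)\sim0$. It then remains to verify $\eta_S\not\sim0$: otherwise $\widehat\pi^{*}\eta$ would vanish, so the nontrivial étale double cover of $T$ attached to $K_T$ would pull back trivially, which is excluded using $q(S)=0$ and the injectivity of $\widehat\pi^{*}$ on the relevant part of $\mathrm{Pic}$. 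This produces the asserted nonzero $2$-torsion element of $S$.
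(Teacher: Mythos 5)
Your double-cover numerology is all correct ($K_WL+L^2=-2$, $K_W\bar R=11-k+2b$, $\bar R^2=4k-30-4b$, $K_SR=k-4$, $R^2=2k-15-2b$, and the exclusion of $b=0$), and your descent of the Enriques $2$-torsion to $S$ is a workable alternative to the paper's Lemma \ref{torsion} (which instead produces an effective $G=\tilde\pi^*(A)+\tilde R$ with $2G\equiv 2K_V$ but $G\not\equiv K_V$ since $p_g(V)=0$); note only that nontriviality of $\tilde\pi^*$ on torsion comes from the branch divisor being a nonzero effective divisor, not from $q(S)=0$. The first genuine gap is the step you defer: the constraints you list do \emph{not} cut the admissible pairs down to $(k,b)=(9,2)$. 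The pairs $(7,1)$, $(7,2)$, $(9,1)$, $(11,3)$, $(11,4)$ satisfy every condition you state ($k$ odd, $k-4\ge 0$, $3k-2b\ge 17$, $K_WL\ge0$, $k\le 9+b$), and even your exclusion of $k=5$ is unjustified, because $p_a(\bar R)\ge 0$ fails for disconnected $\bar R$: e.g.\ for $(k,b)=(5,1)$ a branch divisor with rational components of types $(0,-6)+(0,-4)+(0,-4)$ (in the notation $\Gamma\colon(p_a,\Gamma^2)$) has $p_a(\bar R)=-2$ and reproduces all your intersection numbers. The paper closes these cases with three inputs absent from your proposal: (a) $K_W^2\ge K_V^2=7-k$, i.e.\ $b\le k-7$, from Proposition \ref{prop:3.3}(vii) (quoted from \cite{NGI}), which kills $k=5$ and $k=7$ outright; (b) the theorem of \cite{RSMN},\cite{PSMN} (Lemma \ref{lemma:RSMN1}) that $k=\rho(W)-2$ disjoint nodal curves force $W$ minimal, which kills $(9,1)$ (there $\rho(W)=11$, $k=9$, yet $K_W^2=-1<0$) and $(11,3)$; (c) for $k=11$ the bicanonical map is composed with $\sigma$ (Proposition \ref{coro:3.6}) and Xiao's theorem \cite{DBS} excludes an Enriques quotient.

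The second and larger gap is the classification of the fixed divisor once $(k,b)=(9,2)$ is known. The numerical data $p_a(\bar R)=3$, $e(\bar R)=-4$ are compatible with all ten configurations of the paper's Table 1 --- $(4,2)+(0,-4)$, $(3,-2)$, $(4,4)+(1,-2)+(0,-4)$, $(4,4)+(0,-6)$, $(3,0)+(1,-2)$, $(3,2)+(2,0)+(0,-4)$, $(3,2)+(1,-4)$, $(2,-2)+(2,0)$, $(3,2)+(1,-2)+(1,-2)$, $(2,0)+(2,0)+(1,-2)$ --- and eliminating all but two of them is the actual content of the paper's Section \ref{ctm}. Your proposed tools cannot do this. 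In particular, your parenthetical claim that a rational fixed component contradicts minimality and $p_g(S)=0$ is false: a $(0,-4)$ component of $B_0$ corresponds to a $(-2)$-curve on $S$, perfectly compatible with $S$ minimal of general type with $p_g=0$ (such components genuinely occur for $k=9$ in Table 1); the paper excludes it in the Enriques case because, by Lemmas \ref{lemma1:k=9} and \ref{criterion of alpha}, its image on the Enriques model would be a \emph{ninth} disjoint nodal curve, exceeding the bound of eight. The remaining positive-genus splittings are excluded by arguments with no counterpart in your sketch: the structure theorem $\Sigma'=D_1\times D_2/G$ for Enriques surfaces with eight nodes \cite{ES8} (cases $(0,-6)$ and $(1,-4)$), and, for the three-component and $(2,-2)+(2,0)$ cases, elliptic pencils $|\tilde\Gamma_2|$ or $|2\tilde\Gamma_2|$ from \cite{ESI} combined with Hurwitz's formula and cohomological computations such as $h^0(W',\mathcal{O}_{W'}(\tilde\Gamma_1))=2$. ``Connectedness considerations'' is not a substitute for this; as written, the proposal establishes the numerology but not the theorem.
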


\medskip

Let $S$ be a minimal surface of general type with $p_{g}(S)=q(S)=0$
having an involution $\sigma$. There is a commutative diagram:
$\xymatrix{
  V  \ar[r]^{\epsilon}  \ar[d]_{\tilde{\pi}} & S  \ar[d]^{\pi}\\
  W \ar[r]^{\eta}                                 & \Sigma }$
In this diagram $\pi$ is the quotient map induced by the involution
$\sigma$. And $\epsilon$ is the blow-up of $S$ at $k$ isolated fixed
points of $\sigma$. Also, $\tilde{\pi}$ is induced by the quotient
map $\pi$ and $\eta$ is the minimal resolution of the $k$ double
points made by the quotient map $\pi$. And, there is a fixed divisor
$R$ of $\sigma$ on $S$ which is a smooth, possibly
reducible, curve. We set $R_{0}:=\epsilon^{*}(R)$ and
$B_{0}:=\tilde{\pi}(R_{0})$. Let $\Gamma_i$ be an irreducible
component of $B_0$. When we write $\substack{\Gamma_{i}\\(m, n)}$,
$m$ means $p_{a}(\Gamma_i)$ and $n$ is $\Gamma_i^2$.

\medskip

In the paper, we give the classification of the birational models of
the quotient surfaces and their branch divisors induced by an
involution when $K^2_S=7$. Precisely, we have the following table of
classification.
\begin{table}[h]
\begin{tabular}{|c|c|l|l|}
\hline
$k$ & $K_{W}^{2}$  &  $B_{0}$  &  $W$ \\
\hline
5 & 2  &  $\substack{\Gamma_{0}\\(1,-2)}$  &  minimal of general type \\
\hline 7  & 1  &  $\substack{\Gamma_{0}\\(3,2)}$  &  minimal
of general type  \\
\hline 7&  0  &
$\substack{\Gamma_{0}\\(2,-2)}$ &
minimal properly elliptic, or of general\\
&  &  $\substack{\Gamma_{0}\\(2,0)}+\substack{\Gamma_{1}\\(1,-2)}$ &  type whose minimal model has $K^2=1$\\
\hline 9& $-2$   &
$\substack{\Gamma_{0}\\(4,2)}+\substack{\Gamma_{1}\\(0,-4)}$ & $\kappa(W)\le 1$, and      \\
&      &  $\substack{\Gamma_{0}\\(3,-2)}$  & if $W$ is birational to an Enriques surface  \\
&      &
$\substack{\Gamma_{0}\\(4,4)}+\substack{\Gamma_{1}\\(1,-2)}+\substack{\Gamma_{2}\\(0,-4)}$
& then $B_0=\substack{\Gamma_{0}\\(3,0)}+\substack{\Gamma_{1}\\(1,-2)}$ or $\substack{\Gamma_{0}\\(3,-2)}$. \\
&      &  $\substack{\Gamma_{0}\\(4,4)}+\substack{\Gamma_{1}\\(0,-6)}$  &  \\
&      &  $\substack{\Gamma_{0}\\(3,0)}+\substack{\Gamma_{1}\\(1,-2)}$  &  \\
&      &  $\substack{\Gamma_{0}\\(3,2)}+\substack{\Gamma_{1}\\(2,0)}+\substack{\Gamma_{2}\\(0,-4)}$ &  \\
&      &  $\substack{\Gamma_{0}\\(3,2)}+\substack{\Gamma_{1}\\(1,-4)}$ &  \\
&      &  $\substack{\Gamma_{0}\\(2,-2)}+\substack{\Gamma_{1}\\(2,0)}$  &  \\
&      &  $\substack{\Gamma_{0}\\(3,2)}+\substack{\Gamma_{1}\\(1,-2)}+\substack{\Gamma_{2}\\(1,-2)}$  &  \\
&      & $\substack{\Gamma_{0}\\(2,0)}+\substack{\Gamma_{1}\\(2,0)}+\substack{\Gamma_{2}\\(1,-2)}$  &  \\
\hline
11 & $-4$ &  & rational surface \\
\hline
\end{tabular}
\end{table}

If $k=11$, the bicanonical map is composed with the involution. We
will omit the classification of $B_0$ for $k=11$ because there are
detailed studies in \cite{CSGNB}, \cite{NGI} and \cite{BS7}.

\medskip

The paper is organized as follows. In Section 3 we provide the
classification of branch divisors $B_{0}$, and birational models of
quotient surfaces $W$ for each possible $k$. Our approach follows by
the same approach as in \cite{NGI}, \cite{CMP} and \cite{RSMN}. But we have to
face different problems with respect to previous known results.
Section 4 is devoted to the study when $W$ is birational to an
Enriques surface. Firstly, we see that an Enriques surface $W'$,
obtained by contracting two $(-1)$-curves from $W$, has eight
disjoint $(-2)$-curves. Then via detailed study of Enriques surfaces
with eight $(-2)$-curves, only two possible cases of branch divisors
are remained by excluding all other cases. Section 5 is devoted to
the study of the branch divisors of examples given in \cite{BS7}.

Even if we are not able to construct a new example of such
surfaces which are double covers of surfaces birational to an Enriques surface or surfaces of
general type, our work will help to find such an example and to give
the classification of these surfaces.

\section{Notation and Conventions}
In this section we fix the notation which will be used. We work over the field of complex numbers in this paper.

Let $X$ be a smooth projective surface. Let $\Gamma$ be a curve in
$X$ and $\hat{\Gamma}$ be the normalization of $\Gamma$. We set:\\\\
$K_X$: the canonical divisor of $X$;\\
$NS(X)$: the N\'eron-Severi group of $X$;\\
$\rho(X)$: the rank of $NS(X)$;\\
$\kappa(X)$: the Kodaira dimension of $X$;\\
$q(X)$: the irregularity of $X$, that is, $h^{1}(X,\mathcal{O}_{X})$;\\
$p_{g}(X)$: the geometric genus of $X$, that is, $h^{0}(X,\mathcal{O}_{X}(K_{X}))$;\\
$p_{a}(\Gamma)$: the arithmetic genus of $\Gamma$, that is,
$\Gamma(\Gamma+K_{X})/2+1$;\\
$p_{g}(\Gamma)$: the geometric genus of $\Gamma$, that is,
$h^{0}(\hat{\Gamma},\mathcal{O}_{\hat{\Gamma}}(K_{\hat{\Gamma}}))$;\\
$\equiv$: the linear equivalence of divisors on a surface;\\
$\sim$: the numerical equivalence of divisors on a surface;\\
$\Gamma\colon(m,n)$ or $\substack{\Gamma\\(m, n)}$: $m$ is
$p_{a}(\Gamma)$ and $n$ is the self intersection number of $\Gamma$;\\
$(-n)$-curve: a smooth irreducible rational curve with the self intersection number $-n$,
in particular we call that a $(-2)$-curve is nodal;\\
We usually omit the sign $\cdot$ of the intersection product of two
divisors on a surface.\\

Let $S$ be a minimal surface of general type with $p_{g}(S)=q(S)=0$
having an involution $\sigma$. Then there is a commutative diagram:
\begin{displaymath}
\xymatrix{
  V  \ar[r]^{\epsilon}  \ar[d]_{\tilde{\pi}} & S  \ar[d]^{\pi}\\
  W \ar[r]^{\eta}                                 & \Sigma }
\end{displaymath}
In the above diagram $\pi$ is the quotient map induced by the
involution $\sigma$. And $\epsilon$ is the blowing-up of $S$ at $k$
isolated fixed points arising from the involution $\sigma$. Also,
$\tilde{\pi}$ is induced by the quotient map $\pi$ and $\eta$ is the
minimal resolution of the $k$ double points made by the quotient map
$\pi$.  We denote the $k$ disjoint $(-1)$-curves on $V$ (resp. the
$k$ disjoint $(-2)$-curves on $W$) related to the $k$ disjoint
isolated fixed points on $S$ (resp. the $k$ double points on
$\Sigma$) as $E_{i}$ (resp. $N_{i})$, $i=1,\dots, k$. And, there is
a fixed divisor $R$ of $\sigma$ on $S$ which is a
smooth, possibly reducible, curve. So we set
$R_{0}:=\epsilon^{*}(R)$ and $B_{0}:=\tilde{\pi}(R_{0})$.

The map $\tilde{\pi}$ is a flat double cover branched on
$\tilde{B}:=B_{0}+\sum_{i=1}^{k}N_{i}$. Thus there exists a divisor
$L$ on $W$ such that $2L\equiv\tilde{B}$ and
\[\tilde{\pi}_{*}\mathcal{O}_{V}=\mathcal{O}_{W}\oplus
\mathcal{O}_{W}(-L).\]\ Moreover,
$K_{V}\equiv\tilde{\pi}^{*}(K_{W}+L)$ and
$K_{S}\equiv\pi^{*}K_{\Sigma}+R$.

\medskip

\section{Classification of branch divisors and quotient surfaces}

In this section we focus on the classification of the
birational models of the quotient surfaces and their branch divisors
induced by an involution.

Since $\epsilon^{*}(2K_{S})\equiv\tilde{\pi}^{*}(2K_{W}+B_{0})$, the
divisor $2K_{W}+B_{0}$ is nef and big, and
$(2K_{W}+B_{0})^{2}=2K_{S}^{2}$. We begin with recalling the results in
\cite{NGI} and \cite{RSMN}.

\begin{prop}[Proposition 3.3 and Corollary 3.5 in \cite{NGI}]\label{prop:3.3}
Let $S$ be a minimal surface of general type with $p_{g}=0$ and let
$\sigma$ be an involution of $S$.
Then\\
(i) $k\geq4$;\\
(ii) $K_{W}L+L^{2}=-2$;\\
(iii) $h^{0}(W,\mathcal{O}_{W}(2K_{W}+L))=K_{W}^{2}+K_{W}L$;\\
(iv) $K_{W}^{2}+K_{W}L\geq0$;\\
(v) $k=K_{S}^{2}+4-2h^{0}(W,\mathcal{O}_{W}(2K_{W}+L))$;\\
(vi) $h^{0}(W,\mathcal{O}_{W}(2K_{W}+B_{0}))=K_{S}^{2}+1-h^{0}(W,\mathcal{O}_{W}(2K_{W}+L))$;\\
(vii) $K_{W}^{2}\geq K_{V}^{2}$.
\end{prop}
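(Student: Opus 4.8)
The plan is to extract everything from Riemann--Roch on $W$, the projection formula for the double cover $\tilde\pi$, and the observation that the invariants $p_g=q=0$ of $S$ descend to $W$. First I would record the descent. Since $V$ is a blow-up of the minimal surface $S$ we have $\chi(\mathcal{O}_V)=\chi(\mathcal{O}_S)=1$ and $p_g(V)=q(V)=0$; as $\tilde\pi$ is a finite double cover, pullback embeds $H^0(K_W)\hookrightarrow H^0(K_V)$ and $H^1(\mathcal{O}_W)\hookrightarrow H^1(\mathcal{O}_V)$, so $p_g(W)=q(W)=0$ and $\chi(\mathcal{O}_W)=1$. Feeding $\tilde\pi_*\mathcal{O}_V=\mathcal{O}_W\oplus\mathcal{O}_W(-L)$ into Riemann--Roch for $\mathcal{O}_W(-L)$ gives $\chi(\mathcal{O}_V)=2\chi(\mathcal{O}_W)+\tfrac12 L(L+K_W)$, and substituting $\chi(\mathcal{O}_V)=\chi(\mathcal{O}_W)=1$ yields $L^2+K_WL=-2$, which is (ii).

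For (iii), Riemann--Roch on $W$ together with (ii) gives $\chi(\mathcal{O}_W(2K_W+L))=K_W^2+K_WL$ after a one-line computation. To upgrade $\chi$ to $h^0$ I would kill the higher cohomology by passing upstairs: the projection formula gives
\[
\tilde\pi_*\mathcal{O}_V(2K_V)=\mathcal{O}_W(2K_W+2L)\oplus\mathcal{O}_W(2K_W+L),
\]
so each $H^i(W,\mathcal{O}_W(2K_W+L))$ is a direct summand of $H^i(V,\mathcal{O}_V(2K_V))$. Since $2K_V=\epsilon^*(2K_S)+2\sum E_i$ one has $H^i(V,\mathcal{O}_V(2K_V))\cong H^i(S,\mathcal{O}_S(2K_S))$, and these vanish for $i=1,2$ by Kawamata--Viehweg vanishing applied to the nef and big $K_S$ together with $H^0(S,\mathcal{O}_S(-K_S))=0$. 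Hence $h^1=h^2=0$ and $h^0(W,\mathcal{O}_W(2K_W+L))=K_W^2+K_WL$. Part (iv) is then immediate, since the right-hand side of (iii) is an $h^0$ and so is nonnegative.

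Parts (v) and (vi) are bookkeeping once (ii)--(iii) are in hand. For (v) I would combine $K_V^2=K_S^2-k$ (blow-up at $k$ points) with $K_V^2=2(K_W+L)^2$ (degree of $\tilde\pi$) and (ii) to get $K_S^2-k=2(K_W^2+K_WL)-4$, then substitute (iii). For (vi), note $h^0(V,\epsilon^*(2K_S))=P_2(S)=K_S^2+1$ while $\epsilon^*(2K_S)\equiv\tilde\pi^*(2K_W+B_0)$, so the projection formula gives $K_S^2+1=h^0(2K_W+B_0)+h^0(2K_W+B_0-L)$. Now $2K_W+B_0-L\equiv(2K_W+L)-\sum N_i$, and because the branch curve $\tilde B=B_0+\sum N_i$ is smooth the $N_i$ are disjoint $(-2)$-curves disjoint from $B_0$; thus $K_WN_i=0$ and $B_0N_i=0$, whence $LN_i=-1$ and $(2K_W+L)N_i=-1$. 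Restriction to each $N_i\cong\mathbb{P}^1$ then forces every section of $2K_W+L$ to vanish on it, so $h^0(2K_W+B_0-L)=h^0(2K_W+L)$, giving (vi).

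The two statements needing input beyond Riemann--Roch are (i) and (vii). For (vii) I would use Noether's formula on both surfaces: $\chi=1$ gives $K_W^2-K_V^2=e(V)-e(W)$, while the Euler-number formula for the double cover $\tilde\pi$ branched on $\tilde B$ gives $e(V)=2e(W)-e(\tilde B)$, so (vii) is equivalent to $e(W)\ge e(\tilde B)$; I would then close the gap by bounding the Euler number of the branch curve against $e(W)=12-K_W^2$. The genuine obstacle is (i): the bound $k\ge4$ cannot follow from these numerical identities alone, since a smooth double cover with $k=0$ is a priori possible, so the hypothesis $p_g=q=0$ must be used essentially. I would prove it with the holomorphic Lefschetz fixed-point formula for $\sigma$: when $p_g=q=0$ the holomorphic Lefschetz number is $1$, each isolated fixed point contributes $\tfrac14$, and the fixed curve contributes a curve term; comparing the two sides forces $k$ to be large, in particular $k\ge4$. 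This fixed-point computation is the step I expect to require the most care.
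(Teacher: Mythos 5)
The paper itself gives no proof of this Proposition; it is quoted verbatim from \cite{NGI}, so your attempt can only be judged on its own terms. Parts (ii) and (vi) of your argument are correct, and your reductions of (iv) and (v) to (iii) are the standard ones. The genuine gap is in (iii): your vanishing argument rests on the claim that $H^{i}(V,\mathcal{O}_V(2K_V))\cong H^{i}(S,\mathcal{O}_S(2K_S))$ for all $i$, and this is false for $i=1$. Because $2K_V=\epsilon^{*}(2K_S)+2\sum E_i$ carries coefficient $2$ on the exceptional curves, $R^{1}\epsilon_{*}\mathcal{O}_V(2K_V)\neq 0$: from the exact sequence
\[
0\longrightarrow\mathcal{O}_V\bigl(\epsilon^{*}(2K_S)+\textstyle\sum E_i\bigr)\longrightarrow\mathcal{O}_V(2K_V)\longrightarrow\bigoplus_{i=1}^{k}\mathcal{O}_{E_i}(-2)\longrightarrow 0
\]
together with $h^{1}(S,2K_S)=h^{2}(S,2K_S)=0$ one gets $h^{1}(V,2K_V)=k$, which is positive in every case at hand. (Equivalently: $\chi(2K_V)=1+K_V^{2}=1+K_S^{2}-k$, while $h^{0}(2K_V)=P_2(S)=1+K_S^{2}$ and $h^{2}(2K_V)=h^{0}(-K_V)=0$.) So your direct-summand argument only yields $h^{1}(W,2K_W+L)\le k$, not $0$; indeed, as stated it would equally ``prove'' $h^{1}(W,2K_W+2L)=0$, which is false since that invariant summand is exactly where the $\mathbb{C}^{k}$ lives. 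Hence (iii), and with it (iv) and (v), remain unproved. Two honest ways to close the gap: (1) equivariantly --- each $E_i$ is fixed pointwise by the lifted involution and the canonical linearization of $2K_V=\tilde{\pi}^{*}(2K_W+2L)$ is trivial along the fixed locus, so the isomorphism $H^{1}(V,2K_V)\cong\bigoplus_i H^{1}(E_i,\mathcal{O}_{E_i}(-2))$ is equivariant with trivial action; thus all of $H^{1}(V,2K_V)$ is invariant, i.e.\ lies in $H^{1}(W,2K_W+2L)$, forcing the anti-invariant summand $H^{1}(W,2K_W+L)$ to vanish; or (2) Kawamata--Viehweg in $\mathbb{Q}$-divisor form --- $K_W+L$ is numerically equivalent to $\frac12(2K_W+B_0)+\frac12\sum N_i$, a nef and big $\mathbb{Q}$-divisor plus a fractional divisor with simple normal crossings support, whence $h^{i}(W,K_W+(K_W+L))=0$ for $i>0$.

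Two further points. For (vii), your reduction $K_W^{2}-K_V^{2}=e(W)-e(\tilde B)$ is correct, but the proposed finish is circular: ``bounding $e(\tilde B)$ against $e(W)=12-K_W^{2}$'' is precisely the statement to be proven, and no intrinsic bound on the Euler number of the branch curve can deliver it. The actual proof is soft: $\tilde{\pi}^{*}\colon H^{2}(W,\mathbb{Q})\to H^{2}(V,\mathbb{Q})$ is injective (since $\tilde{\pi}_{*}\tilde{\pi}^{*}=2\cdot\mathrm{id}$), so $b_2(W)\le b_2(V)$; as $V$ and $W$ both have $p_g=q=0$ and $\chi=1$, Noether's formula reads $K^{2}=10-b_2$ on each surface, and (vii) follows at once. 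For (i), the holomorphic Lefschetz route is the right one, but the step you deferred needs its decisive ingredient made explicit: by adjunction the contribution of a fixed curve component $R_l$ is $\frac{1-g(R_l)}{2}+\frac{R_l^{2}}{4}=-\frac{K_SR_l}{4}$, so the formula reads $k=4+K_SR$, and $k\ge 4$ follows from the nefness of $K_S$, i.e.\ from the minimality of $S$ --- without that input the curve term could be positive and the bound would fail.
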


\begin{prop}[Corollary 3.6 in \cite{NGI}]\label{coro:3.6} Let $S$ be a minimal surface
of general type with $p_{g}=0$, let $\varphi\colon S\rightarrow
\mathbb{P}^{K_{S}^{2}}$ be the bicanonical map of $S$ and let
$\sigma$ be an involution of $S$.
Then the following conditions are equivalent:\\
(i) $\varphi$ is composed with $\sigma$;\\
(ii) $h^{0}(W,\mathcal{O}_{W}(2K_{W}+L))=0$;\\
(iii) $K_{W}(K_{W}+L)=0$;\\
(iv) the number of isolated fixed points of $\sigma$ is
$k=K_{S}^{2}+4$.
\end{prop}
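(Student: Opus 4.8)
The plan is to dispatch two of the three equivalences immediately from Proposition~\ref{prop:3.3} and then to concentrate all the work on $(i)\Leftrightarrow(ii)$. Part (iii) of Proposition~\ref{prop:3.3} reads $h^{0}(W,\mathcal{O}_{W}(2K_{W}+L))=K_{W}^{2}+K_{W}L=K_{W}(K_{W}+L)$, so the vanishing of the left-hand side is literally the condition $K_{W}(K_{W}+L)=0$; this gives $(ii)\Leftrightarrow(iii)$. Part (v) reads $k=K_{S}^{2}+4-2h^{0}(W,\mathcal{O}_{W}(2K_{W}+L))$, so $h^{0}(2K_{W}+L)=0$ is equivalent to $k=K_{S}^{2}+4$; this gives $(ii)\Leftrightarrow(iv)$. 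Hence only $(i)\Leftrightarrow(ii)$ carries genuine content.

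For $(i)\Leftrightarrow(ii)$ I would decompose the bicanonical system into $\sigma$-eigenspaces. Since $\epsilon$ is a blow-up, pullback gives an isomorphism $H^{0}(S,\mathcal{O}_{S}(2K_{S}))\cong H^{0}(V,\epsilon^{*}(2K_{S}))$, and $\epsilon^{*}(2K_{S})\equiv\tilde{\pi}^{*}(2K_{W}+B_{0})$. The double-cover formula $\tilde{\pi}_{*}\mathcal{O}_{V}=\mathcal{O}_{W}\oplus\mathcal{O}_{W}(-L)$ then produces the eigenspace splitting
\[
H^{0}(V,\tilde{\pi}^{*}(2K_{W}+B_{0}))=H^{0}(W,\mathcal{O}_{W}(2K_{W}+B_{0}))\oplus H^{0}(W,\mathcal{O}_{W}(2K_{W}+B_{0}-L)),
\]
in which the first summand is the $\sigma$-invariant part and the second the anti-invariant part. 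The map $\varphi$ is composed with $\sigma$ precisely when the induced projective involution on $\mathbb{P}^{K_{S}^{2}}$ fixes the image of $\varphi$ pointwise. The fixed locus of that involution is the disjoint union of the two eigen-subspaces $\mathbb{P}(H^{+})$ and $\mathbb{P}(H^{-})$; as the image of $\varphi$ is irreducible it must lie in one of them, and as it is nondegenerate (a complete linear system) it spans, so it lies in $\mathbb{P}(H^{+})$ iff $H^{-}=0$ and in $\mathbb{P}(H^{-})$ iff $H^{+}=0$. Since the invariant summand is nonzero (its dimension is $K_{S}^{2}+1-h^{0}(2K_{W}+L)$, positive because $k\ge 4$ by parts (vi) and (i)), I conclude that $\varphi$ is composed with $\sigma$ iff $h^{0}(W,\mathcal{O}_{W}(2K_{W}+B_{0}-L))=0$.

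It then remains to identify $h^{0}(2K_{W}+B_{0}-L)$ with $h^{0}(2K_{W}+L)$. From $2L\equiv\tilde{B}=B_{0}+\sum_{i}N_{i}$ I get $2K_{W}+B_{0}-L\equiv 2K_{W}+L-\sum_{i}N_{i}$. Each $N_{i}$ is a $(-2)$-curve disjoint from $B_{0}$ and from the other $N_{j}$, so $K_{W}N_{i}=0$ and $2LN_{i}=\tilde{B}N_{i}=N_{i}^{2}=-2$, whence $(2K_{W}+L)N_{i}=-1<0$. Thus every effective member of $|2K_{W}+L|$ contains each irreducible $N_{i}$, so $\sum_{i}N_{i}$ is the fixed part and $h^{0}(2K_{W}+L)=h^{0}(2K_{W}+L-\sum_{i}N_{i})=h^{0}(2K_{W}+B_{0}-L)$. (Alternatively, this same identity drops out of combining part (vi) of Proposition~\ref{prop:3.3} with the Riemann--Roch value $h^{0}(S,2K_{S})=K_{S}^{2}+1$.) Combining with the previous paragraph, $\varphi$ is composed with $\sigma$ iff $h^{0}(2K_{W}+L)=0$, which is exactly $(ii)$.

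The hard part—really the only nonformal step—will be the geometric identification of ``composed with $\sigma$'' with the vanishing of a single eigenspace: one must argue carefully that the bicanonical image is irreducible and nondegenerate so that it can meet the fixed locus only inside one eigen-subspace, and then rule out the degenerate alternative $H^{+}=0$ using $k\ge 4$. By comparison the numerical input, namely the sign $(2K_{W}+L)N_{i}=-1$ that forces the resolution curves into the fixed part, is routine once the disjointness and self-intersections of the $N_{i}$ are recorded.
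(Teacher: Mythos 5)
Your proof is correct. Note that the paper itself contains no proof of this proposition---it is quoted as Corollary 3.6 of \cite{NGI}---and your argument is essentially the standard one from that source: $(ii)\Leftrightarrow(iii)$ and $(ii)\Leftrightarrow(iv)$ are read off from parts (iii) and (v) of Proposition \ref{prop:3.3}, while $(i)\Leftrightarrow(ii)$ follows from the eigenspace decomposition $H^{0}(S,2K_{S})\cong H^{0}(W,\mathcal{O}_{W}(2K_{W}+B_{0}))\oplus H^{0}(W,\mathcal{O}_{W}(2K_{W}+B_{0}-L))$, the observation that $(2K_{W}+L)N_{i}=-1$ forces the nodal curves into the fixed part so that the anti-invariant summand has dimension $h^{0}(W,\mathcal{O}_{W}(2K_{W}+L))$, and the use of $k\geq 4$ (equivalently Proposition \ref{prop:3.3}(vi)) to rule out the degenerate alternative that the invariant summand vanishes.
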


By $(i)$ and $(v)$ of Proposition \ref{prop:3.3}, the possibilities
of $k$ are $5,7,9,11$ if $K_{S}^{2}=7$. In particular, if $k=11$,
the bicanonical map $\varphi$ is composed with the involution, which
is treated by Proposition \ref{coro:3.6}.

\begin{lemma}[Theorem 3.3 in \cite{RSMN}] \label{lemma:RSMN2}
Let $W$ be a smooth rational surface and let
$N_{1},\dots,N_{k}\subset W$ be disjoint nodal curves. Then\\
(i) $k\le \rho(W)-1$, and equality holds if and only if $W={\mathbb F}_2$;\\
(ii) if $k=\rho(W)-2$ and $\rho(W)\ge 5$, then $k$ is even.
\end{lemma}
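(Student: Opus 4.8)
The plan is to handle the two parts with different tools: the Hodge index theorem and the minimal model program for (i), and the theory of unimodular lattices for (ii).

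\emph{Part (i), the inequality.} Since $W$ is rational we have $p_{g}(W)=q(W)=0$, so $NS(W)=H^{2}(W,\ZZ)$ and, by the Hodge index theorem, the intersection form on $NS(W)$ has signature $(1,\rho(W)-1)$. Each $N_{i}$ is a smooth rational curve with $N_{i}^{2}=-2$, and the $N_{i}$ are pairwise disjoint, so the Gram matrix of $N_{1},\dots,N_{k}$ is $-2\,\mathrm{Id}_{k}$; in particular the classes $[N_{i}]$ are linearly independent and span a negative definite sublattice $M\subset NS(W)$ of rank $k$. A negative definite subspace of a space of signature $(1,\rho(W)-1)$ has dimension at most $\rho(W)-1$, whence $k\le\rho(W)-1$.

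\emph{Part (i), the equality case.} Suppose $k=\rho(W)-1$. By adjunction $K_{W}N_{i}=-2-N_{i}^{2}=0$ for every $i$, so $K_{W}$ is orthogonal to the maximal negative definite subspace $M\otimes\QQ$ and therefore lies in its one-dimensional, positive definite orthogonal complement. Since no rational surface has numerically trivial canonical class, $K_{W}\ne 0$, and hence $K_{W}^{2}>0$. Noether's formula gives $K_{W}^{2}=10-\rho(W)=9-k$, so $k\le 8$, and Riemann--Roch together with $h^{0}(2K_{W})=0$ yields $h^{0}(-K_{W})\ge 1+K_{W}^{2}\ge 2$; thus $-K_{W}$ is effective. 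I would then run the minimal model program on $W$: contracting a $(-1)$-curve $E$ drops $\rho$ by one, and comparing the bound in (i) before and after the contraction forces $E$ to meet the configuration $\sum N_{i}$, which lets one track how the disjoint $(-2)$-curves degenerate under blow-down. Iterating, one reduces to a minimal rational surface still carrying a $(-2)$-curve; among $\PP^{2}$ and the Hirzebruch surfaces only $\mathbb{F}_{2}$ has one, its negative section being the unique such curve and realising $k=1=\rho-1$. Conversely $\mathbb{F}_{2}$ satisfies equality, giving the stated characterisation.

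\emph{Part (ii), the parity.} Here I would argue lattice-theoretically. As $NS(W)=H^{2}(W,\ZZ)$ is unimodular and $M=\langle N_{1},\dots,N_{k}\rangle\cong A_{1}^{\oplus k}$ is an even negative definite lattice of rank $k=\rho(W)-2$, its orthogonal complement $T:=M^{\perp}$ has rank $2$ and signature $(1,1)$. Writing $\overline{M}$ for the primitive closure of $M$, the theory of discriminant forms (Nikulin) gives an isomorphism of finite quadratic forms $q_{\overline{M}}\cong -q_{T}$ with $|A_{T}|=|\det T|$, while $\overline{M}/M$ is an isotropic subgroup of the discriminant form of $A_{1}^{\oplus k}$. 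Combining these constraints with the signature congruence of van der Blij relating $\mathrm{sign}(NS(W))$ to the discriminant-form invariants of $\overline{M}$ and $T$ modulo $8$, and with the hypothesis $\rho(W)\ge 5$, should pin down the parity of $k$ and force it to be even. Equivalently, reducing modulo $2$ one observes that $x\mapsto x^{2}\bmod 2$ is the linear form $K_{W}\cdot(-)$ on $NS(W)\otimes(\ZZ/2)$, that the images of the $N_{i}$ are isotropic in this nondegenerate $(\ZZ/2)$-quadratic space, and that a counting argument in that space yields the parity.

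\emph{Main obstacle.} The inequality in (i) is immediate. The genuine work lies in the equality case of (i)---controlling the behaviour of the $(-2)$-curves under the blow-downs of the minimal model program so as to arrive at $\mathbb{F}_{2}$---and, above all, in the parity statement (ii), where one must extract a parity (an $8$-periodicity phenomenon) from the discriminant-form data of $A_{1}^{\oplus k}\subset NS(W)$. This is precisely where the hypothesis $\rho(W)\ge 5$ should enter, and where I expect the argument to be most delicate.
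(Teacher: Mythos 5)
First, a point of comparison: the paper does not prove this statement at all --- it is quoted as Theorem 3.3 of \cite{RSMN} (Dolgachev--Mendes Lopes--Pardini) and used as a black box --- so your proposal must stand on its own as a proof of that cited theorem. Judged that way, only the first step is complete: the signature argument giving $k\le\rho(W)-1$ is correct and is indeed the standard one.

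Both remaining parts have genuine gaps. For the equality case of (i), your MMP plan stalls exactly where you admit the work lies: your counting argument does force any $(-1)$-curve $E$ to meet some $N_i$, but after contracting such an $E$ the image of $N_i$ is no longer a $(-2)$-curve, so the inductive setup ``disjoint nodal curves with $k=\rho-1$'' is destroyed, and no mechanism for ``tracking the degeneration'' is given --- that tracking \emph{is} the theorem. Worse, the statement to be proved is that $W$ itself equals $\mathbb{F}_2$, not that its minimal model is $\mathbb{F}_2$; so even a successful reduction to a minimal model would not finish, and your sketch never shows $W$ is minimal. (A cleaner first reduction you could have used: when $k=\rho-1$ one has $K_W\in\langle N_1,\dots,N_k\rangle^{\perp}$, so $\langle N_1,\dots,N_k\rangle\oplus\mathbb{Z}K_W$ is a finite-index sublattice of the unimodular lattice $NS(W)$, forcing $2^k(9-k)$ to be a perfect square and hence $k\in\{1,7,8\}$; but excluding $k=7,8$ geometrically still requires real work.) For (ii), what you have written is a plan, not a proof: ``should pin down the parity'' and ``a counting argument in that space yields the parity'' are precisely the assertions that need proving. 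You also flag that you do not know where the hypothesis $\rho(W)\ge 5$ enters; without locating it the argument cannot be correct, since the hypothesis is essential --- blow up $\mathbb{F}_2$ at a point off the negative section to get $\rho=3$ and $k=1=\rho-2$ odd. (A related subtlety your mod-$2$ sketch ignores: the classes $[N_i]$ need not be independent in $NS(W)\otimes\mathbb{Z}/2$; the subgroup of relations is exactly the ``code'' of even sets of nodal curves on which the argument of \cite{RSMN} is built.) As it stands, the proposal establishes only the inequality in (i).
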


\begin{lemma}[Proposition 4.1 in \cite{RSMN} and Remark 4.3 in \cite{PSMN}] \label{lemma:RSMN1}
Let $W$ be a surface with $p_{g}(W)=q(W)=0$ and $\kappa(W)\geq0$,
and let $N_{1},\dots,N_{k}\subset W$
be disjoint nodal curves. Then\\
(i) $k\le \rho(W)-2$ unless $W$ is a fake projective plane;\\
(ii) if $k=\rho(W)-2$, then $W$ is minimal unless $W$ is the blowing-up of
a fake projective plane at one point or at two infinitely near
points.
\end{lemma}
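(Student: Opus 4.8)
The plan is to translate the existence of $k$ disjoint nodal curves into a lattice count, read off the weak bound $k\le\rho(W)-1$ from the Hodge index theorem, and then eliminate the extremal value $k=\rho(W)-1$ case by case in $\kappa(W)$, the general-type case being handled by the Bogomolov--Miyaoka--Yau inequality. First I would record the numerology forced by $p_g(W)=q(W)=0$: then $\chi(\mathcal{O}_W)=1$ and $b_1(W)=0$, so $b_2(W)=\rho(W)$, $e(W)=2+\rho(W)$, and Noether's formula gives $K_W^2=10-\rho(W)$, while the Hodge index theorem says the intersection form on $NS(W)\otimes\mathbb{R}$ has signature $(1,\rho(W)-1)$. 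Each $N_i$ is a smooth rational curve with $N_i^2=-2$, so adjunction gives $K_WN_i=0$, and since the $N_i$ are pairwise disjoint their Gram matrix is $-2I_k$; hence they are linearly independent and span a negative-definite sublattice $M\cong A_1^{\oplus k}$ contained in $K_W^{\perp}$. A negative-definite subspace has dimension at most $\rho(W)-1$, giving $k\le\rho(W)-1$. The entire content of (i) is therefore to rule out $k=\rho(W)-1$ and to recognise the single exception when $\rho(W)=1$.

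So suppose $k=\rho(W)-1$. Then $M^{\perp}\otimes\mathbb{R}$ is a line $\mathbb{R}h$ with $h^2>0$, and $K_W\in M^{\perp}$ forces $K_W=ch$ in $NS(W)\otimes\mathbb{R}$, whence
\[K_W^2=c^2h^2\ge 0,\qquad\text{with equality if and only if }K_W\sim 0.\]
This already excludes every $W$ with $K_W^2<0$ (in particular all non-minimal surfaces with $\kappa\le 1$), and it excludes the minimal properly elliptic case, where $K_W^2=0$ would force $K_W\sim 0$ against $\kappa(W)=1$. Two cases survive: $W$ of general type ($K_W^2>0$, so $\rho(W)\le 9$) and $W$ an Enriques surface ($\kappa(W)=0$, $K_W\sim 0$, $\rho(W)=10$, $k=9$).

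In the general-type case I would contract the $N_i$ to a normal surface $\bar W$ carrying $k$ nodes; the contraction is crepant, so $K_{\bar W}^2=K_W^2$, $e(\bar W)=e(W)-k$, and the orbifold Euler number of $\bar W$ is $e(W)-\tfrac32 k$. Miyaoka's orbifold Bogomolov--Miyaoka--Yau inequality then gives
\[10-\rho(W)=K_W^2\le 3\bigl(e(W)-\tfrac32 k\bigr)=3\bigl(2+\rho(W)\bigr)-\tfrac92 k,\]
that is $k\le\tfrac{8}{9}\bigl(\rho(W)-1\bigr)$. For $\rho(W)\ge 2$ the right-hand side is strictly less than $\rho(W)-1$, so $k=\rho(W)-1$ is impossible; for $\rho(W)=1$ it forces $k=0$ together with equality in Bogomolov--Miyaoka--Yau, so $\bar W$ is a ball quotient with $K^2=9$, i.e. a fake projective plane. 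This settles (i) for surfaces of general type, with exactly the stated exception.

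The main obstacle is the Enriques case $k=9$, where $K_W\sim 0$ makes both the Hodge-index and the Bogomolov--Miyaoka--Yau inputs vacuous. Here I would appeal to the classical fact that an Enriques surface carries at most eight disjoint $(-2)$-curves: the $N_i$ pull back along the \'etale $K3$ double cover to $18$ disjoint $(-2)$-curves interchanged in pairs by the fixed-point-free involution, and the structure of the Enriques lattice $U\oplus E_8$ together with that involution obstructs such a configuration, forcing $k\le 8=\rho(W)-2$. Finally, for (ii) I would rerun the contraction-and-Euler-number bookkeeping in the borderline case $k=\rho(W)-2$: tracking a surviving $(-1)$-curve through the minimal model shows that non-minimality can occur only for the blow-up of a fake projective plane at one point (giving $k=0$, $\rho=2$) or at two infinitely near points (giving $k=1$, $\rho=3$), which are precisely the exceptions in the statement, and that otherwise the bound is attained only when $W$ is minimal.
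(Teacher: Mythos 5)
The paper offers no proof of this lemma at all: it is quoted as a known result (Proposition 4.1 of \cite{RSMN} together with Remark 4.3 of \cite{PSMN}), so your argument has to stand entirely on its own. Your setup is correct: the numerology, the Hodge-index bound $k\le\rho(W)-1$, and the reduction of (i) to excluding $k=\rho(W)-1$, where only two cases survive (general type with $K_W^2>0$, or Enriques) are all fine. The first genuine gap is in the general-type case: Miyaoka's orbifold inequality $K_{\bar W}^2\le 3e_{\mathrm{orb}}(\bar W)$ from \cite{MQS} requires $K_{\bar W}$ to be nef, i.e.\ $W$ to be minimal --- note that this paper itself is careful on exactly this point, passing to the minimal model before invoking \cite{MQS} in the proof of Lemma \ref{lemma:k=7}. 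Your ``general-type case'' includes non-minimal surfaces with $K_W^2>0$, to which the inequality as you invoke it does not apply. The gap is fixable: when $k=\rho(W)-1$ and $K_W^2>0$ the classes $N_1,\dots,N_k,K_W$ span $NS(W)\otimes\mathbb{R}$ orthogonally, so a $(-1)$-curve $E$ would satisfy $E\sim\sum a_iN_i+cK_W$ with $a_i=-\frac{1}{2}EN_i\le 0$ and $c=-1/K_W^2<0$; intersecting with an ample class $H$ and using $K_WH>0$ (as $K_W$ is big) gives $EH<0$, a contradiction, so $W$ is automatically minimal. But this step, or some substitute for it, is missing from your write-up.

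The second and more serious gap is part (ii), for which you give no argument: ``rerun the contraction-and-Euler-number bookkeeping'' and ``tracking a surviving $(-1)$-curve through the minimal model shows\dots'' are assertions, not proofs. Part (ii) concerns precisely the non-minimal surfaces attaining $k=\rho(W)-2$, which is exactly the regime where your only quantitative tool (the orbifold Bogomolov--Miyaoka--Yau inequality) is unavailable because $K_W$ is not nef. Moreover, the exceptions in (ii) --- the blow-up of a fake projective plane at one point ($\rho=2$, $k=0$) or at two infinitely near points ($\rho=3$, $k=1$) --- are non-minimal surfaces of general type with $K_W^2>0$ that genuinely attain $k=\rho(W)-2$, so no numerical inequality alone can output ``minimal''; one must analyse how $(-1)$-curves meet the nodal curves, descend to the minimal model, and show that the minimal model has $\rho=1$. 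That analysis is the real content of Remark 4.3 in \cite{PSMN}, which completes \cite{RSMN}, and it is absent here. A minor remark in the other direction: your Enriques case does not need the K3 cover and the lattice-theoretic facts you gesture at (your sketch silently requires Nikulin's bound of sixteen disjoint $(-2)$-curves on a K3, itself a nontrivial theorem); an Enriques surface is minimal with $K$ nef and $K^2=0$, so the same Miyaoka inequality gives $0\le 3\bigl(12-\tfrac{3}{2}k\bigr)$, i.e.\ $k\le 8$, immediately.
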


For simplicity of notation, we let $D$ stand for $2K_{W}+B_{0}$.

\begin{theorem}\label{theorem:k}
Let $S$ be a minimal surface of general type with $p_g(S)=0$ and
$K^2_S=7$ having an involution $\sigma$. Then\\
(i) $D^{2}=14$;\\
(ii) if $k=11$, then $K_{W}D=0$, $K^2_W=-4$, and $W$ is a rational surface;\\
(iii) if $k=9$, then $K_{W}D=2$, $K_{W}^{2}=-2$, and $\kappa(W)\le 1$;\\
(iv) if $k=7$, then $K_{W}D=4$, $0\le K_{W}^{2}\le 1$, and
$\kappa(W)\ge 1$. Furthermore, if $W$ is properly elliptic then
$K^2_W=0$. If $K^2_W=1$ then $W$ is minimal of general type. And
if $K^2_W=0$ and $W$ is of general type then $K_{W'}^2=1$ where $W'$
is the minimal model of $W$;\\
(v) if $k=5$, then $K_{W}D=6$, $K_{W}^{2}=2$, and $W$ is minimal of
general type.
\end{theorem}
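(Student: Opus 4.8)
The plan is to pull out all the numerical data first and then pin down $K_W^2$ and $\kappa(W)$ by positivity arguments against the nef and big divisor $D=2K_W+B_0$. Part (i) is already recorded before the statement: from $\epsilon^{*}(2K_{S})\equiv\tilde{\pi}^{*}D$ one gets $(2K_W+B_0)^2=2K_S^2=14$. For $K_WD$ I would first note that each $N_i$ is a nodal curve, so adjunction gives $K_WN_i=0$; since $2L\equiv B_0+\sum N_i$ this yields $K_WB_0=K_W(2L-\sum N_i)=2K_WL$ and hence $K_WD=2K_W^2+K_WB_0=2(K_W^2+K_WL)$. By Proposition \ref{prop:3.3}(iii),(v) with $K_S^2=7$ we have $K_W^2+K_WL=h^0(2K_W+L)=(11-k)/2$, so $K_WD=11-k$, giving the stated values $0,2,4,6$ for $k=11,9,7,5$.

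Next I would bound $K_W^2$. The lower bound is Proposition \ref{prop:3.3}(vii): $K_W^2\ge K_V^2=K_S^2-k=7-k$. For the upper bound I use Noether's formula on $W$, where $p_g=q=0$ so $\chi(\Oh_W)=1$: then $K_W^2=10-b_2(W)\le 10-\rho(W)$. The $k$ disjoint nodal curves $N_i$ are independent in $NS(W)$, so $\rho(W)\ge k+1$; if $\kappa(W)\ge0$ Lemma \ref{lemma:RSMN1}(i) improves this to $\rho(W)\ge k+2$, whence $K_W^2\le 8-k$, and at the extreme value $K_W^2=8-k$ one has $k=\rho(W)-2$, so Lemma \ref{lemma:RSMN1}(ii) forces $W$ minimal. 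In the rational case $b_2=\rho$, and since every $k\in\{5,7,9,11\}$ is odd while $\rho\ge k+1\ge5$, the parity clause of Lemma \ref{lemma:RSMN2}(ii) excludes $k=\rho-2$ and the $\FF_2$ clause excludes $k=\rho-1$, so $k\le\rho-3$ and $K_W^2=10-\rho\le 7-k$, i.e.\ $K_W^2=7-k$ exactly.

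The decisive step is the Kodaira dimension, for which the main tool is the Hodge index inequality $D^2M^2\le(DM)^2$ applied to $M=\mu^{*}K_{W'}$, where $W'$ is a minimal model of $W$ and $\mu\colon W\to W'$ contracts $r\ge0$ curves, so $K_W=\mu^{*}K_{W'}+Z$ with $Z\ge0$ exceptional and $K_{W'}^2=K_W^2+r$. Since $D$ is nef, $0\le D\mu^{*}K_{W'}\le DK_W=11-k$, so $14\,K_{W'}^2\le(11-k)^2$. For $k=9,11$ this forces $K_{W'}^2<1$, excluding general type; a companion Hodge index argument ($D\mu^{*}K_{W'}=0$ puts $\mu^{*}K_{W'}$ in the negative definite lattice $D^{\perp}$) also excludes properly elliptic when $k=11$. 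For $k=7$ it gives $K_{W'}^2\le1$, and for $k=5$ it gives $K_{W'}^2\le2$, which together with $K_{W'}^2=K_W^2+r\ge K_W^2$ forces $r=0$, i.e.\ minimality. To exclude a rational quotient when $k=5,7$ (where $K_W^2=7-k\ge0$) I would use that a rational surface has $P_2=h^0(2K_W)=0$, so Riemann--Roch gives $h^0(-K_W)\ge\chi(-K_W)=1+K_W^2\ge1$; thus $-K_W$ is effective and $D(-K_W)\ge0$ since $D$ is nef, contradicting $DK_W=11-k>0$. Finally $\kappa(W)=0$ is excluded for $k=5,7$: an Enriques minimal model has $K^2=0$ so $K_W^2=-r\le0$, but $K_W^2\ge7-k\ge0$ forces $r=0$, whence $K_W\equiv0$ and $K_WD=0\neq11-k$.

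Assembling the cases: for $k=5$ the estimates leave only general type, and Hodge index ($14\cdot3>36$, equivalently the $(-2)$-curve structure of $B_0$ forcing $B_0^2\le0$ against $B_0^2=D^2-4K_W^2=2$ when $K_WB_0=0$) kills $K_W^2=3$, so $W$ is minimal of general type with $K_W^2=2$; for $k=7$ one gets $\kappa(W)\ge1$ with $0\le K_W^2\le1$, and the minimal-model arithmetic with $K_{W'}^2=1$ yields the three refinements; for $k=9$ one gets $K_W^2=-2$ (the value $-1=8-k$ forcing minimality with $K^2\ge0$, a contradiction) and $\kappa(W)\le1$; for $k=11$ one gets $K_W^2=-4$ and $W$ rational. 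I expect the \emph{main obstacle} to be exactly this Kodaira dimension step: at each $k$ the numerics alone admit two scenarios (e.g.\ $K_W^2\in\{0,1\}$ for $k=7$, or general type versus $\kappa\le1$ for $k=9$), and separating them rests delicately on the positivity of $D$ combined with the refined nodal-curve Lemmas \ref{lemma:RSMN2} and \ref{lemma:RSMN1}; the one genuinely external input is Proposition \ref{prop:3.3}(vii), and the single case I would defer (as the paper does) is the exclusion of an Enriques quotient for $k=11$, where the bicanonical map is composed with $\sigma$ by Proposition \ref{coro:3.6} and one invokes the cited analyses of that situation.
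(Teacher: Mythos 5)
Your proposal is correct and takes essentially the same approach as the paper: $K_WD=11-k$ via Proposition \ref{prop:3.3}(iii),(v), the lower bound $K_W^2\ge 7-k$ via Proposition \ref{prop:3.3}(vii), the upper bounds and minimality claims via Noether's formula together with Lemmas \ref{lemma:RSMN2} and \ref{lemma:RSMN1}, the Kodaira-dimension exclusions via the Hodge index theorem against the nef and big divisor $D$, rational quotients for $k=5,7$ killed by $h^0(W,\mathcal{O}_W(-K_W))>0$ against $DK_W>0$, and the $k=11$ Enriques case deferred to an external citation exactly as the paper does (Theorem 3 of \cite{DBS}). Your packaging of the index theorem as $D^2(\mu^*K_{W'})^2\le (D\mu^*K_{W'})^2\le (DK_W)^2$ is only a mild streamlining of the paper's computations of the form $(7t^*K_{W'}-bD)^2\le 0$ (which require pinning down $Dt^*K_{W'}$ exactly rather than just bounding it); the substance coincides.
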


\begin{proof}
$(i)$ This follows by $\epsilon^{*}(2K_{S})\equiv\tilde{\pi}^{*}(D)$ and $K_{S}^{2}=7$.\\
$(ii)$ Firstly, $K_{W}D=2K_{W}(K_{W}+L)=0$ by
Proposition \ref{coro:3.6}. Secondly,
$K_{V}^{2}=K_{S}^{2}-k=7-11=-4$. We have thus $K_{W}^{2}\ge -4$ by $(vii)$ of
Proposition \ref{prop:3.3}. Finally, $K_{W}^{2}\le 0$ by the
algebraic index theorem because $K_{W}D=0$ and $D$ is nef and big.
Since $K_WD=0$, $W$ can be a rational surface or birational to
an Enriques surface. Enriques surface is excluded by Theorem 3 in \cite{DBS}.
Also, by Lemma \ref{lemma:RSMN2} $k\le
\rho(W)-3$, and we have thus $\rho(W)\ge 14$. Therefore $K^2_W=-4$.   \\
$(iii)$ Firstly, $K_{W}D=2K_{W}(K_{W}+L)=2$ follows by
$(iii)$ and $(v)$ of Proposition \ref{prop:3.3}. Secondly,
$K_{V}^{2}=K_{S}^{2}-k=7-9=-2$. We have thus $K_{W}^{2}\geq -2$ by $(vii)$ of
Proposition \ref{prop:3.3}. Finally, the algebraic index theorem yields
$0\geq(7K_{W}-D)^{2}=49K_{W}^{2}-14K_{W}D+D^{2}=49K_{W}^{2}-14$, and we have thus $K_{W}^{2}\leq0$.

If $W$ is a rational surface then by Lemma \ref{lemma:RSMN2} $k\le
\rho(W)-3$, and so $\rho(W)\ge 12$. Therefore $K^2_W=-2$. If
$\kappa(W)\ge 0$ then by Lemma \ref{lemma:RSMN1} $\rho(W)\ge 11$. If
$\rho(W)=11$ then $W$ is minimal. It gives a contradiction
because $K^2_W=-1$. Therefore $\rho(W)=12$ and $K^2_W=-2$.

Moreover, $W$ is not of general type; suppose $W$ is of general type,
then we consider a birational morphism $t\colon$ $W\rightarrow W'$
such that $W'$ is the minimal model of $W$. Also, we can write
$K_{W}\equiv t^{*}(K_{W'})+E$, $E>0$ since $K_{W}^{2}\le0$.
Then $Dt^{*}(K_{W'})=2$; firstly, $Dt^{*}(K_{W'})\leq2$ because
$2=DK_{W}=Dt^{*}(K_{W'})+DE$ and $D$ is nef. Secondly,
$Dt^{*}(K_{W'})\geq2$ follows from that
$Dt^{*}(K_{W'})=2K_{W}t^{*}(K_{W'})+B_{0}t^{*}(K_{W'})=2(t^{*}(K_{W'})+E)t^{*}(K_{W'})
+B_{0}t^{*}(K_{W'})=2K_{W'}^{2}+B_{0}t^{*}(K_{W'})\geq2$ because
$K_{W'}^{2}>0$ and $K_{W'}$ is nef.

The algebraic index theorem yields
$0\geq(7t^{*}(K_{W})-D)^{2}=49t^{*}(K_{W'})^{2}-14Dt^{*}(K_{W'})+D^{2}=49K_{W'}^{2}-28+14.$
We have thus $K_{W'}^{2}\leq0$, which gives a contradiction. \\
$(iv)$ Since $K_V^2=K^2_S-k=0$, $K^2_W\ge 0$. $K_WD=4$ yields
$K_W^2\le 1$. $K^2_W\ge 0$ and $K_WD=4$ imply that $W$ is not
birational to an Enriques surface. Again $k=7$ implies that if $W$ is
a rational surface then $K^2_W=0$. But then $h^0(W,\mathcal{O}_{W}(-K_W))>0$ and this
is impossible because $D$ is nef.

If $W$ is properly elliptic then $K^2_W=0$. And if $K_{W}^2=1$ then $W$
is a minimal surface of general type by Lemma \ref{lemma:RSMN1}.

Now suppose that $K^2_W=0$ and $W$ is of general type. Then we
consider a birational morphism $t\colon$ $W\rightarrow W'$ such that
$W'$ is the minimal model of $W$. Suppose $K_{W'}^2\ge 2$.

We write $K_{W}\equiv t^{*}(K_{W'})+E$, $E>0$. Firstly,
$Dt^{*}(K_{W'})\le 4$ because $K_WD=4$. Secondly, $Dt^{*}(K_{W'})\ge
4$: $Dt^{*}(K_{W'})=2K_{W}t^{*}(K_{W'})+B_{0}t^{*}(K_{W'})
=2(t^{*}(K_{W'})+E)t^{*}(K_{W'}) +B_{0}t^{*}(K_{W'})
=2K_{W'}^{2}+B_{0}t^{*}(K_{W'})\ge 4$ since we suppose
$K_{W'}^{2}\ge 2$ and $K_{W'}$ is nef.

Therefore $Dt^{*}(K_{W'})=4$. Then by the algebraic index theorem
and $D^2=14$,
$0\geq(7t^{*}(K_{W'})-2D)^{2}=49t^{*}(K_{W'})^{2}-28Dt^{*}(K_{W'})+4D^{2}=49K_{W'}^{2}-112+56,$
which gives a contradiction. \\
$(v)$ Since $K^2_V=2$, $K_W^2\ge 2$ and so $W$ is either a rational
surface or a surface of general type. But if it is a rational
surface then $h^0(W,\mathcal{O}_{W}(-K_W))>0$ gives a contradiction. Also, $K_WD=6$ and
the algebraic index theorem implies that $K^2_W\le 2$.

Now we know that $W$ is of general type with $K^2_W=2$, it is enough
to prove that $W$ is minimal. Suppose $W$ is not minimal. Then we
consider a birational morphism $t\colon$ $W\rightarrow W'$ such that
$W'$ is the minimal model of $W$. Also, we can write $K_{W}\equiv
t^{*}(K_{W'})+E$, $E>0$. Firstly, $Dt^{*}(K_{W'})\le 6$ because
$K_WD=6$, and $K_{W'}^2\ge 3$. Secondly, $Dt^{*}(K_{W'})\ge 6$:
$Dt^{*}(K_{W'})=2K_{W}t^{*}(K_{W'})+B_{0}t^{*}(K_{W'})
=2(t^{*}(K_{W'})+E)t^{*}(K_{W'}) +B_{0}t^{*}(K_{W'})
=2K_{W'}^{2}+B_{0}t^{*}(K_{W'})\ge 6$ since $K_{W'}^{2}\ge 3$ and
$K_{W'}$ is nef.

Therefore $Dt^{*}(K_{W'})=6$. Then by the algebraic index theorem
and $D^2=14$,
$0\geq(7t^{*}(K_{W'})-3D)^{2}=49t^{*}(K_{W'})^{2}-42Dt^{*}(K_{W'})+9D^{2}=49K_{W'}^{2}-252+126,$
which gives a contradiction.
\end{proof}


We now study the possibilities of an irreducible component
$\Gamma\subset B_{0}$ for each number of isolated fixed points. Let
$\GV$ be the preimage of $\Gamma$ in the double cover $V$ of $W$. We
do not consider the case $k=11$ because it is already treated in \cite{CSGNB}, \cite{NGI} and
\cite{BS7}.

\begin{lemma}\label{lemma:2kv.d} For any irreducible component $\Gamma\subset B_{0}$ on $W$,
$2K_{V}\GV=\Gamma D$, where $\tilde{\pi}^{*}\Gamma \equiv 2\GV$.
\end{lemma}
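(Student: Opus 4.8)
The plan is to compute both sides of the asserted identity by pulling back to the double cover $V$ and using the projection formula together with the structural formulas recorded in Section 2. The key relations I would lean on are $K_{V}\equiv\tilde{\pi}^{*}(K_{W}+L)$, the branch relation $2L\equiv\tilde{B}=B_{0}+\sum_{i}N_{i}$, and the fact that $\tilde{\pi}^{*}(2K_{S})\equiv\tilde{\pi}^{*}(D)$ coming from $D=2K_{W}+B_{0}$. The first step is to justify the hypothesis $\tilde{\pi}^{*}\Gamma\equiv 2\GV$: since $\Gamma$ is an irreducible component of the branch divisor $B_{0}$, its reduced preimage $\GV$ in $V$ is a curve that $\tilde{\pi}$ maps isomorphically (it is a ramification curve), so the scheme-theoretic pullback of $\Gamma$ is $2\GV$ with multiplicity $2$ along the ramification. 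This is the standard behavior of a flat double cover along its branch locus, and it is exactly the content of the parenthetical clause in the statement.

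Granting $\tilde{\pi}^{*}\Gamma\equiv 2\GV$, I would next compute the left-hand side. Using $K_{V}\equiv\tilde{\pi}^{*}(K_{W}+L)$ and the projection formula for the degree-$2$ map $\tilde{\pi}$, one gets
\[
2K_{V}\GV = K_{V}\cdot(2\GV) = \tilde{\pi}^{*}(K_{W}+L)\cdot\tilde{\pi}^{*}\Gamma = 2\,(K_{W}+L)\Gamma,
\]
where the factor $2$ in the last equality is $\deg\tilde{\pi}$. Hence $K_{V}\GV=(K_{W}+L)\Gamma$. For the right-hand side I would expand $D=2K_{W}+B_{0}$ and use the branch relation to rewrite things in terms of $K_{W}+L$: since $2L\equiv B_{0}+\sum_{i}N_{i}$ and $\Gamma\subset B_{0}$ is disjoint from the nodal curves $N_{i}$ (they arise from the isolated fixed points, away from $\Gamma$), we have $\Gamma\,N_{i}=0$, so $\Gamma B_{0}=2\,\Gamma L$. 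Therefore
\[
\Gamma D = \Gamma(2K_{W}+B_{0}) = 2\,\Gamma K_{W} + 2\,\Gamma L = 2\,(K_{W}+L)\Gamma = 2K_{V}\GV,
\]
which is the claim.

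The main point requiring care is the disjointness $\Gamma N_{i}=0$, which is what lets me replace $\Gamma B_{0}$ by $2\Gamma L$ cleanly; this is a geometric input about how the nodal curves sit relative to the branch components, and I would make sure it is either already established in the setup of Section 2 or argue it directly from the construction of $\eta$ as the resolution of the $k$ double points. If instead one prefers to avoid this geometric input, the identity can also be read off directly from $2K_{V}\GV=(K_{W}+L)\cdot\tilde{\pi}^{*}\Gamma$ combined with $D\equiv 2(K_{W}+L)-\sum_{i}N_{i}$ (using $2L\equiv B_{0}+\sum N_{i}$ so that $B_{0}\equiv 2L-\sum N_{i}$ and hence $D=2K_{W}+B_{0}\equiv 2(K_{W}+L)-\sum N_{i}$); then $\Gamma D=2(K_{W}+L)\Gamma-\sum_{i}\Gamma N_{i}$, and the $\sum_{i}\Gamma N_{i}$ term vanishes, recovering the same conclusion. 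Either way the proof is a short projection-formula computation, and the only substantive step is the intersection-number bookkeeping with the nodal curves.
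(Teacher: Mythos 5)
Your proof is correct, and it takes a genuinely different (though closely related) route from the paper's. The paper pulls everything up to $V$: it quotes the identity $\epsilon^{*}(2K_{S})\equiv\tilde{\pi}^{*}(D)$ from the start of Section 3, applies the projection formula to get $2\Gamma D=\tilde{\pi}^{*}(\Gamma)\,\tilde{\pi}^{*}(D)=2\GV\,\epsilon^{*}(2K_{S})$, and then identifies $\GV\epsilon^{*}(2K_{S})$ with $2K_{V}\GV$ using the disjointness of $\GV$ from the exceptional locus of $\epsilon$. You instead stay on $W$: you compute $2K_{V}\GV=2(K_{W}+L)\Gamma$ from $K_{V}\equiv\tilde{\pi}^{*}(K_{W}+L)$, and then convert $(K_{W}+L)\Gamma$ into $\tfrac{1}{2}\Gamma D$ via the branch relation $2L\equiv B_{0}+\sum_{i}N_{i}$ together with $\Gamma N_{i}=0$. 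The two disjointness inputs are equivalent: $\Gamma N_{i}=0$ on $W$ is exactly the image under $\tilde{\pi}$ of the paper's statement that $\GV$ misses the exceptional curves $E_{i}$, and both come from the fact that the fixed divisor $R$ of an involution on a smooth surface is disjoint from its isolated fixed points (equivalently, the branch divisor $\tilde{B}$ of the smooth flat double cover $\tilde{\pi}$ must be smooth, so its components are pairwise disjoint). So the step you flagged as needing care is legitimate but easily discharged, and the paper itself uses the mirror fact without further comment. What your route buys: it relies only on the double-cover data $(L,\tilde{B})$ recorded in Section 2, with no reference to $K_{S}$ or to the blow-up formula for $K_{V}$. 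What the paper's route buys: brevity, since $\epsilon^{*}(2K_{S})\equiv\tilde{\pi}^{*}(D)$ is already on record, and transparency about what the lemma really says, namely that $D$ restricts on branch components to the pullback of the bicanonical class of $S$.
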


\begin{proof} We have $2\Gamma D=\tilde{\pi}^{*}(\Gamma)
\tilde{\pi}^{*}(D)=2\GV\epsilon^{*}(2K_{S})$. We have thus $\Gamma
D=\GV\epsilon^{*}(2K_{S})$. On the other hand, we know that
$\GV\epsilon^{*}(2K_{S})=2K_{V}\GV$ because $\GV$ $\cap$
(Exceptional locus of $\epsilon)= \varnothing$. Therefore $2K_{V}\GV=\Gamma
D$.
\end{proof}

\begin{rmk}
\emph{By Lemma \ref{lemma:2kv.d}, $\Gamma D$ should be even, and if $\Gamma D=0$ then $\Gamma$ is a $(-4)$-curve.}
\end{rmk}

\subsection{\bf Classification of $B_{0}$ for $k=9$}\label{CB:k=9}

In this case, $B_{0}D=10$ because $B_{0}D=(D-2K_{W})D=14-4=10$. So
$\Gamma D=10, 8, 6, 4$, or $2$.

\paragraph{1) The case $\Gamma D=10$.}
Since $D^{2}=14$ and $D$ is nef and big,
$0\geq(7\Gamma-5D)^{2}=49\Gamma^{2}-70\Gamma
D+25D^{2}=49\Gamma^{2}-350$ by the algebraic index theorem. That is,
$\Gamma^{2}\leq7$. Thus we get $\GV^{2}\leq3$ because
$2\GV^{2}=\Gamma^{2}$. Moreover,  we know that $0\leq
p_{a}(\GV)=1+\frac{1}{2}(\GV^{2}+K_{V}\GV)
=1+\frac{1}{2}(\GV^{2}+5)$ by Lemma \ref{lemma:2kv.d}. Thus
$-7\leq\GV^{2}\leq3$. By the genus formula,
$\GV^{2}=-7,-5,-3,-1,1,3$.

\begin{enumerate}
\item The case $\GV^{2}=-7$: in this case, $p_{a}(\GV)=0$. So $\Gamma\colon(0,-14)$.
Then if we write that
$B_{0}=\Gamma_{0}+\Gamma_{1}+\cdots+\Gamma_{l}$ such that
$\Gamma_{0}=\Gamma$ and $\Gamma_{i}$ are $(-4)$-curves for each
$i=1,\dots ,l$, then
\[6=2-2K_{W}^{2}=K_{W}(D-2K_{W})=K_{W}B_{0}=12+2l.\]
We get a contradiction because $l=-3$
\item The cases $\GV^{2}=-5, -3$: similar arguments as the case (1) give contradictions because $l<0$.
\item The case $\GV^{2}=-1$: we get $p_{a}(\GV)=3$. So $\Gamma\colon(3,-2)$ and $l=0$.
\item The case $\GV^{2}=1$: here, $p_{a}(\GV)=4$. So $\Gamma\colon(4,2)$ and $l=1$.
\item The case $\GV^{2}=3$: lastly, $p_{a}(\GV)=5$. So $\Gamma\colon(5,6)$ and $l=2$.
\end{enumerate}
We have thus the following possibilities of $B_{0}$ in the case
$\Gamma D=10$.
\[B_0:\,
\substack{\Gamma_{0}\\(5,6)}+\substack{\Gamma_{1}\\(0,-4)}+\substack{\Gamma_{2}\\(0,-4)},
\, \substack{\Gamma_{0}\\(4,2)}+\substack{\Gamma_{1}\\(0,-4)}, \,
\substack{\Gamma_{0}\\(3,-2)}
\]

\begin{rmk}
\emph{$\substack{\Gamma_{0}\\(5,6)}+\substack{\Gamma_{1}\\(0,-4)}+\substack{\Gamma_{2}\\(0,-4)}$
cannot occur by Proposition $2.1.1$ of \cite{MQS} because a smooth
rational curve in $B_{0}$ corresponds to a smooth rational curve on
$S$.}
\end{rmk}

\paragraph{2) The case $\Gamma_{0}D=8$ and $\Gamma_{1}D=2$}
We can use the similar argument as the above \ref{CB:k=9}.$1)$ for each of $\Gamma_{0}D$ and
$\Gamma_{1}D$. However, we have to consider
$B_{0}=\Gamma_{0}+\Gamma_{1}+\Gamma_{1}'+\cdots+\Gamma_{l}'$ to get
the possibilities for $B_{0}$, where $\Gamma_{i}'\colon(0,-4)$ for
all $i\in \{1,2,\dots ,l\}$ if those exist. Then we get the following
possible cases.
\[B_{0}:\,
\substack{\Gamma_{0}\\(4,4)}+\substack{\Gamma_{1}\\(1,-2)}+\substack{\Gamma_{2}\\(0,-4)},
\, \substack{\Gamma_{0}\\(4,4)}+\substack{\Gamma_{1}\\(0,-6)}, \,
\substack{\Gamma_{0}\\(3,0)}+\substack{\Gamma_{1}\\(1,-2)} \]

Now, we give all remaining cases by the similar argument as the above \ref{CB:k=9}.$2)$.

\paragraph{3) The case $\Gamma_{0}D=6$ and $\Gamma_{1} D=4$}\textrm{ }
\[B_{0}: \,
\substack{\Gamma_{0}\\(3,2)}+\substack{\Gamma_{1}\\(2,0)}+\substack{\Gamma_{2}\\(0,-4)},
\, \substack{\Gamma_{0}\\(3,2)}+\substack{\Gamma_{1}\\(1,-4)}, \,
\substack{\Gamma_{0}\\(2,-2)}+\substack{\Gamma_{1}\\(2,0)}\]
\paragraph{4) The case $\Gamma_{0}D=6$, $\Gamma_{1} D=2$ and $\Gamma_{2} D=2$}\textrm{ }
\[B_{0}: \,
\substack{\Gamma_{0}\\(3,2)}+\substack{\Gamma_{1}\\(1,-2)}+\substack{\Gamma_{2}\\(1,-2)}\]
\paragraph{5) The case $\Gamma_{0}D=4$, $\Gamma_{1} D=4$ and $\Gamma_{2} D=2$}
\textrm{ }
\[B_{0}: \,
\substack{\Gamma_{0}\\(2,0)}+\substack{\Gamma_{1}\\(2,0)}+\substack{\Gamma_{2}\\(1,-2)}\]
\paragraph{6) The case $\Gamma_{0}D=4$, $\Gamma_{1} D=2$, $\Gamma_{2} D=2$ and $\Gamma_{3} D=2$}
\textrm{ }\\
We get a contradiction by the similar argument in
\ref{CB:k=9}.$1).(1)$.

\paragraph{7) The case $\Gamma_{0}D=2$, $\Gamma_{1} D=2$, $\Gamma_{2} D=2$, $\Gamma_{3} D=2$ and $\Gamma_{4} D=2$}
\textrm{ }\\
This case is also ruled out by the similar argument in
\ref{CB:k=9}.$1).(1)$.
\medskip

By Theorem \ref{theorem:k} and from the above classification, we get
Table 1:
\begin{table}[h]
\centering \textrm{Table 1: Classifications of $K_{W}^{2}$, $B_{0}$
and $W$ for $k=9$\\}\textrm{\\}
\begin{tabular}{|c|l|l|}
\hline
$K_{W}^{2}$  &  $B_{0}$  &  $W$\\
\hline
$-2$   &  $\substack{\Gamma_{0}\\(4,2)}+\substack{\Gamma_{1}\\(0,-4)}$ & $\kappa(W)\le 1$  \\
      &  $\substack{\Gamma_{0}\\(3,-2)}$  &  \\
      &  $\substack{\Gamma_{0}\\(4,4)}+\substack{\Gamma_{1}\\(1,-2)}+\substack{\Gamma_{2}\\(0,-4)}$  &  \\
      &  $\substack{\Gamma_{0}\\(4,4)}+\substack{\Gamma_{1}\\(0,-6)}$  &  \\
      &  $\substack{\Gamma_{0}\\(3,0)}+\substack{\Gamma_{1}\\(1,-2)}$  &  \\
      &  $\substack{\Gamma_{0}\\(3,2)}+\substack{\Gamma_{1}\\(2,0)}+\substack{\Gamma_{2}\\(0,-4)}$ &  \\
      &  $\substack{\Gamma_{0}\\(3,2)}+\substack{\Gamma_{1}\\(1,-4)}$  &  \\
      &  $\substack{\Gamma_{0}\\(2,-2)}+\substack{\Gamma_{1}\\(2,0)}$  &  \\
      &  $\substack{\Gamma_{0}\\(3,2)}+\substack{\Gamma_{1}\\(1,-2)}+\substack{\Gamma_{2}\\(1,-2)}$  &  \\
      & $\substack{\Gamma_{0}\\(2,0)}+\substack{\Gamma_{1}\\(2,0)}+\substack{\Gamma_{2}\\(1,-2)}$  &  \\
\hline
\end{tabular}
\end{table}

\subsection{\bf Classification of $B_{0}$ for $k=7$}\label{classification:k=7}

In this case, $B_{0}D=6$. So $\Gamma D$ can be $6, 4, 2$. By using
similar arguments as the above \ref{CB:k=9}, we get the following
tables related to $K_{W}^{2}$ and $B_{0}$ for each case of $\Gamma
D$.

\paragraph{1) The case $\Gamma D=6$}
\textrm{ }

\medskip

\begin{tabular}{|c|l|}
\hline
$K_{W}^{2}$  &  $B_{0}$\\
\hline
1  &  $\substack{\Gamma_{0}\\(3,2)}$\\
\hline
0  &  $\substack{\Gamma_{0}\\(3,2)}+\substack{\Gamma_{1}\\(0,-4)}$, $\substack{\Gamma_{0}\\(2,-2)}$\\
\hline
\end{tabular}

\begin{lemma}\label{lemma:k=7}
$B_0=\substack{\Gamma_{0}\\(3,2)}+\substack{\Gamma_{1}\\(0,-4)}$ is
not possible.
\end{lemma}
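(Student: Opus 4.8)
The plan is to extract the numerical data forced by $B_0=\substack{\Gamma_{0}\\(3,2)}+\substack{\Gamma_{1}\\(0,-4)}$, and then to invoke the dichotomy of Theorem~\ref{theorem:k}(iv): for $k=7$ with $K_W^2=0$, either $W$ is of general type with minimal model $W'$ satisfying $K_{W'}^2=1$, or $W$ is properly elliptic. I would rule out each case separately. For the preliminaries, since $B_0=D-2K_W$ we get $K_WB_0=K_WD-2K_W^2=4$ and $B_0^2=D^2-4K_WD+4K_W^2=-2$; adjunction applied to the two components gives $K_W\Gamma_0=K_W\Gamma_1=2$, and then $B_0^2=-2=\Gamma_0^2+2\Gamma_0\Gamma_1+\Gamma_1^2=-2+2\Gamma_0\Gamma_1$ forces $\Gamma_0\Gamma_1=0$. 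Finally $\Gamma_1$ is a $(-4)$-curve, so $\Gamma_1D=0$ by the Remark following Lemma~\ref{lemma:2kv.d}.

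For the general type case the key point is that $K_{W'}^2-K_W^2=1$, so $t\colon W\to W'$ is a single blow-up with exceptional $(-1)$-curve $E$ and $K_W\equiv t^*K_{W'}+E$. I would run the algebraic index theorem on $W'$, where $K_{W'}^2=1>0$. With $a:=\Gamma_0E$ one has $(t_*\Gamma_0)^2=\Gamma_0^2+a^2=2+a^2$ and $K_{W'}t_*\Gamma_0=K_W\Gamma_0-a=2-a$, so the index inequality $2+a^2\le(2-a)^2$ forces $a=0$. Nefness of $D$ then gives $0\le DE=2K_WE+\Gamma_0E+\Gamma_1E=-2+\Gamma_1E$, i.e.\ $\Gamma_1E\ge 2$; applying the index theorem to $t_*\Gamma_1$, with $b:=\Gamma_1E$, gives $-4+b^2\le(2-b)^2$, i.e.\ $b\le 2$, hence $b=2$. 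But then $t_*\Gamma_1$ is a nonzero effective class with $K_{W'}t_*\Gamma_1=0$ and $(t_*\Gamma_1)^2=0$; since $K_{W'}$ is nef and big its orthogonal complement is negative definite, so such a class must be numerically trivial — a contradiction. This disposes of the general type case cleanly.

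For the properly elliptic case let $f\colon W\to\mathbb{P}^1$ be the fibration (the base is $\mathbb{P}^1$ as $q(W)=0$), so that $K_W\equiv\lambda F$ numerically with $\lambda>0$ for a general fibre $F$. Each $N_i$ satisfies $K_WN_i=0$ and is therefore vertical, whereas $K_W\Gamma_1=2\neq 0$ forces $\Gamma_1$ to be horizontal; and since the branch divisor $\tilde B=\Gamma_0+\Gamma_1+\sum N_i$ is a disjoint union of smooth curves, $\Gamma_1$ is disjoint from all seven $N_i$. From $K_W^2=0$ and $\chi(\Oh_W)=1$ one has $e(W)=12$, hence $b_2(W)=10$ and $\rho(W)=b_2(W)=10$. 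The strategy is to confront the seven disjoint \emph{vertical} nodal curves $N_i$, sitting in fibres that the horizontal $\Gamma_1$ avoids, with these invariants and with the requirement that $\kappa(W)=1$ forces multiple fibres.

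The main obstacle is precisely this elliptic case, because a naive Euler-number count is insufficient: an $I_0^*$ fibre has $e=6$ yet supplies four mutually disjoint $(-2)$-curves, so two such fibres already exhaust $e(W)=12$ while furnishing eight disjoint nodal curves, and they still leave room for the multiple fibres (of smooth reduction, $e=0$) needed to make $\kappa(W)=1$. The exclusion therefore has to use finer data: that $\Gamma_1$ meets such fibres only along their central components (forcing $\Gamma_1F$ to be even), the $2$-divisibility $\tilde B\equiv 2L$, and the fact that with two $I_0^*$ fibres present $\rho(W)=10$ severely constrains the classes of the two horizontal curves $\Gamma_0,\Gamma_1$ (note that $\Gamma_0-\Gamma_1$ is vertical of square $-2$, since $F\Gamma_0=F\Gamma_1$). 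Showing that no admissible configuration of singular and multiple fibres can simultaneously carry seven such $N_i$ together with the horizontal $(-4)$-curve $\Gamma_1$ is the crux, and is where I expect the real work to lie.
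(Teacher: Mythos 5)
Your reduction of the numerics is right ($K_W\Gamma_0=K_W\Gamma_1=2$, $\Gamma_0\Gamma_1=0$, $\Gamma_1D=0$), and your disposal of the general-type half of the dichotomy is complete, correct, and genuinely different from the paper's. The paper shows that the exceptional $(-1)$-curve $E$ of $t\colon W\to W'$ misses the seven $N_i$ and meets $\Gamma_1$ at most once, so that the minimal model $W'$ (with $K_{W'}^2=1$) carries seven disjoint $(-2)$-curves plus a disjoint $(-3)$- or $(-4)$-curve, and then quotes Miyaoka's theorem \cite{MQS}. Your route --- Hodge index on $W'$ to force $\Gamma_0E=0$, nefness of $D$ to force $\Gamma_1E\ge 2$, Hodge index again to force $\Gamma_1E\le 2$, and finally the observation that $t_*\Gamma_1$ would be a nonzero effective class with $K_{W'}t_*\Gamma_1=(t_*\Gamma_1)^2=0$, impossible because the orthogonal complement of $K_{W'}$ in $NS(W')$ is negative definite --- is more elementary and avoids Miyaoka entirely for this case.

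But the properly elliptic case is a genuine gap, and you say so yourself: after setting up the fibration, noting that the $N_i$ are vertical, that $\Gamma_1$ is horizontal and disjoint from them, and that $e(W)=12$, $\rho(W)=10$, you correctly observe that a naive Euler-number count cannot succeed (two $I_0^*$ fibres already carry eight disjoint nodal curves) and then stop, declaring the exclusion of all admissible fibre configurations to be where the real work lies. That work is never done, so the lemma is not proved. The missing ingredient is precisely the tool the paper uses for both cases: Miyaoka's bound on disjoint configurations contracting to quotient singularities \cite{MQS}. On a minimal surface with $\kappa\ge 0$ the contributions $e(E_p)-1/|G_p|$ of the contracted configurations sum to at most $c_2-\tfrac{1}{3}c_1^2$; here each $N_i$ gives an $A_1$ point contributing $\tfrac{3}{2}$, the $(-4)$-curve $\Gamma_1$ gives a $\tfrac{1}{4}(1,1)$ point contributing $\tfrac{7}{4}$, and $7\cdot\tfrac{3}{2}+\tfrac{7}{4}=\tfrac{49}{4}>12=c_2-\tfrac{1}{3}K_W^2$. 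This single inequality replaces the entire fibre-by-fibre analysis you anticipate; without it (or some substitute you would still have to supply), your proof covers only half the dichotomy.
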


\begin{proof}
by Theorem \ref{theorem:k}, $W$ is minimal properly elliptic, or of general
type whose minimal model $W'$ has $K^2_{W'}=1$ . If
$W$ is minimal properly elliptic, then we get a contradiction by Miyaoka's
theorem in \cite{MQS} because $W$ has seven disjoint $(-2)$-curves and
one $(-4)$-curve.

We now suppose that $W$ is of general type whose minimal model $W'$ has
$K^2_{W'}=1$. We consider a birational morphism $t: W\rightarrow W'$, and $K_{W}\equiv
t^{*}(K_{W'})+E$, where $E$ is the unique $(-1)$-curve. $E$
cannot meet seven disjoint $N_i$ because $K_{W'}t(N_{i})=-N_{i}E$
and $K_{W'}$ is nef. And $\Gamma_1E\le 1$ because $K_WB_0=4$,
$K_W\Gamma_0=2$, and $t^{*}(K_{W'})\Gamma_1\ge 1$. Then, Miyaoka's
theorem \cite{MQS} again gives a contradiction because $W'$ has
seven disjoint $(-2)$-curves, and one $(-4)$-curve or one
$(-3)$-curve.
\end{proof}

\medskip

\paragraph{2) The case $\Gamma_{0}D=4$ and $\Gamma_{1}D=2$}
\textrm{ }

\medskip

\begin{tabular}{|c|l|}
\hline
$K_{W}^{2}$  &  $B_{0}$\\
\hline
0  &  $\substack{\Gamma_{0}\\(2,0)}+\substack{\Gamma_{1}\\(1,-2)}$\\
\hline
\end{tabular}

\medskip

\paragraph{3) The case $\Gamma_{0}D=2$, $\Gamma_{1} D=2$ and $\Gamma_{2} D=2$}
\textrm{ }

This case is not possible by the similar argument in
\ref{CB:k=9}.$1).(1)$.

\medskip

\begin{table}[h]
\centering \textrm{Table 2: Classifications of $K_{W}^{2}$, $B_{0}$
and $W$ for $k=7$\\} \textrm{\\}
\begin{tabular}{|c|l|l|}
\hline
$K_{W}^{2}$  &  $B_{0}$  &  $W$\\
\hline
1  &  $\substack{\Gamma_{0}\\(3,2)}$  &  minimal of general type\\
\hline
0  & $\substack{\Gamma_{0}\\(2,-2)}$ &  minimal properly elliptic, or of general \\
  &  $\substack{\Gamma_{0}\\(2,0)}+\substack{\Gamma_{1}\\(1,-2)}$ & type whose  minimal model has $K^2=1$\\
 \hline
\end{tabular}
\end{table}


\subsection{\bf Classification of $B_{0}$ for $k=5$}\label{classification:k=5}

In this case, $B_{0}D=2$. So $\Gamma D$ can be $2$. By using similar
arguments as the above \ref{CB:k=9}, we get the following table
related to $K_{W}^{2}$ and $B_{0}$ for $\Gamma D$.

\begin{table}[h]
\centering \textrm{Table 3: Classifications of $K_{W}^{2}$, $B_{0}$
and $W$ for $k=5$\\} \textrm{\\}
\begin{tabular}{|c|l|l|}
\hline
$K_{W}^{2}$  &  $B_{0}$  &  $W$\\
\hline
2  &  $\substack{\Gamma_{0}\\(1,-2)}$  &  of general type\\
\hline
\end{tabular}
\end{table}


\medskip

\section{Quotient surface birational to an Enriques surface}\label{ctm}
In this section we treat the case when $W$ is birational to an
Enriques surface.

\begin{theorem}\label{prop:Enriques}
Let $S$ be a minimal surface of general type with $p_g(S)=0$ and
$K^2_S=7$ having an involution $\sigma$. If $W$ is birational to an Enriques surface then $k=9,
K^2_W=-2$, and the branch divisor
$B_0=\substack{\Gamma_{0}\\(3,0)}+\substack{\Gamma_{1}\\(1,-2)}$ or $\substack{\Gamma_{0}\\(3,-2)}$.
Furthermore, $S$ has a $2$-torsion element.
\end{theorem}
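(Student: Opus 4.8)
The plan is to first pin down $k$ and $K_W^2$, then read off the geometry of the blow-down to the Enriques model, use that to cut the list in Table 1 down to the two stated divisors, and finally extract the torsion statement from the canonical class of the Enriques surface.

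Being birational to an Enriques surface forces $\kappa(W)=0$, so Theorem \ref{theorem:k} immediately excludes $k=5$ ($W$ of general type), $k=7$ ($\kappa(W)\ge 1$) and $k=11$ ($W$ rational). Hence $k=9$, and then part (iii) of Theorem \ref{theorem:k} gives $K_W^2=-2$ with no further work. This is the easy half. Next I would analyze the birational morphism $t\colon W\to W'$ onto the minimal (Enriques) model. Since $K_{W'}^2=0$ and $K_W^2=-2$, $t$ is a composition of two blow-ups, so numerically $K_W\equiv Z$ with $Z\ge 0$ supported on the exceptional locus and $Z^2=-2$. For each of the nine disjoint $(-2)$-curves $N_i\subset W$ one has $K_WN_i=ZN_i=0$, which shows that any $N_i$ not contained in $Z$ is disjoint from the exceptional locus and descends to a $(-2)$-curve on $W'$. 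Because an Enriques surface carries at most eight disjoint nodal curves, the two blow-ups cannot be at distinct points (that would push all nine $N_i$ down); they must be infinitely near, so $Z=\widetilde{\mathcal E}_1+2\mathcal E_2$ with $\widetilde{\mathcal E}_1$ a $(-2)$-curve equal to one $N_i$, and the remaining eight $N_i$ realize the maximal configuration of eight disjoint nodal curves $N_i'$ on $W'$.

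For a branch component $\Gamma\subset B_0$ (smooth and disjoint from every $N_i$) I would then compute its image: $\Gamma\widetilde{\mathcal E}_1=0$ forces $\Gamma\equiv t^*\gamma-\mu\,\widetilde{\mathcal E}_1-2\mu\,\mathcal E_2$ with $\mu=\mathcal E_2\Gamma\ge 0$, whence $K_W\Gamma=2\mu$, the image $\Gamma'=t(\Gamma)$ has multiplicity $\mu$ at both blown-up points, $(\Gamma')^2=\Gamma^2+2\mu^2$, $p_a(\Gamma')=p_a(\Gamma)+\mu(\mu-1)$, and $\gamma\perp N_i'$ for all $i$. A component with $p_a(\Gamma')=0$ would be a ninth $(-2)$-curve on $W'$ disjoint from the eight $N_i'$, which is impossible; since $p_a(\Gamma')=p_a(\Gamma)+\mu(\mu-1)$, this rules out exactly the divisors containing a $\substack{\Gamma\\(0,-4)}$ summand, i.e. three of the entries of Table 1. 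The hard part is to eliminate the remaining entries other than $\substack{\Gamma_0\\(3,-2)}$ and $\substack{\Gamma_0\\(3,0)}+\substack{\Gamma_1\\(1,-2)}$, where the ``ninth curve'' trick no longer suffices. My approach would be to push everything into the rank-two lattice $T:=\langle N_1',\dots,N_8'\rangle^{\perp}\subset \mathrm{NS}(W')$, which has signature $(1,1)$ and contains all the classes $\gamma$. The disjointness $\Gamma_i\Gamma_j=0$ translates into $\gamma_i\gamma_j=2\mu_i\mu_j$, and $B_0'=\sum\gamma_j$ satisfies $(B_0')^2=16$, so each surviving case prescribes an explicit Gram matrix inside $T$. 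One must show these Gram data cannot be realized by genuinely distinct irreducible curves on any Enriques surface carrying eight disjoint nodal curves, combining the precise even-lattice structure of $T$ (via Nikulin's theory of primitive embeddings into $U\oplus E_8$), the $2$-divisibility of $B_0+\sum N_i$, effectivity and irreducibility of the $\gamma_j$, and, pulled back through $\widetilde\pi$ and $\epsilon$ to the minimal surface $S$, Miyaoka-type bounds on configurations of negative curves. I expect this case-by-case verification to be the main obstacle and the most delicate part of the argument.

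Finally, the torsion statement is clean and independent of which case survives. The canonical class $K_{W'}$ is a nonzero $2$-torsion element of $\Pic(W')$; since $t$ is birational, $t^*\colon\Pic(W')\to\Pic(W)$ is injective, so $t^*K_{W'}$ is nonzero $2$-torsion on $W$. From $\widetilde\pi_*\mathcal O_V=\mathcal O_W\oplus\mathcal O_W(-L)$ with $2L\equiv\widetilde B\ne 0$, a Krull--Schmidt comparison of the two splittings shows $\ker\widetilde\pi^*=0$, so $\widetilde\pi^*t^*K_{W'}$ is a nonzero $2$-torsion class on $V$. As $\epsilon\colon V\to S$ is a blow-up, the torsion of $\Pic$ is preserved, and therefore $S$ carries a nonzero $2$-torsion element.
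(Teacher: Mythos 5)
Your opening reductions are sound and essentially coincide with the paper's: $k=9$ and $K_W^2=-2$ follow from Theorem \ref{theorem:k} exactly as you say; your analysis of $t\colon W\to W'$ (the two blow-ups are infinitely near, one $N_i$ is the strict transform of the first exceptional curve, the remaining eight descend to the maximal configuration of eight disjoint nodal curves on $W'$, and $K_W\Gamma=2\mu$ for a branch component) is an equivalent repackaging of Lemmas \ref{lemma1:k=9} and \ref{criterion of alpha}; and your ``ninth nodal curve'' argument correctly kills the three entries of Table 1 containing a $\substack{\Gamma\\(0,-4)}$ component, which is the paper's case a). Your torsion argument also works, though it differs from the paper's: you pull back the $2$-torsion class $K_{W'}$ and check $\ker\tilde{\pi}^{*}=0$ via Krull--Schmidt on $\tilde{\pi}_{*}\mathcal{O}_{V}=\mathcal{O}_{W}\oplus\mathcal{O}_{W}(-L)$ (valid, since $2L\equiv\tilde{B}\not\equiv0$), whereas the paper (Lemma \ref{torsion}) writes $2K_W\equiv 2A$ with $A$ effective and shows $G=\tilde{\pi}^{*}(A)+\tilde{R}$ is numerically but not linearly equivalent to $K_V$ because $p_g(V)=0$.

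The genuine gap is that the heart of the theorem is left unproved. After the $(0,-4)$ cases, five entries of Table 1 must still be excluded:
$\substack{\Gamma_{0}\\(4,4)}+\substack{\Gamma_{1}\\(0,-6)}$,\;
$\substack{\Gamma_{0}\\(3,2)}+\substack{\Gamma_{1}\\(1,-4)}$,\;
$\substack{\Gamma_{0}\\(2,-2)}+\substack{\Gamma_{1}\\(2,0)}$,\;
$\substack{\Gamma_{0}\\(3,2)}+\substack{\Gamma_{1}\\(1,-2)}+\substack{\Gamma_{2}\\(1,-2)}$,\;
$\substack{\Gamma_{0}\\(2,0)}+\substack{\Gamma_{1}\\(2,0)}+\substack{\Gamma_{2}\\(1,-2)}$,
and for these you offer only a program --- Gram matrices in the rank-two lattice $T=\langle N'_1,\dots,N'_8\rangle^{\perp}$, Nikulin embeddings, $2$-divisibility, Miyaoka bounds --- which you yourself defer as ``the main obstacle.'' This is not a routine verification that can be waved through: it occupies most of Section 4 of the paper, and, more importantly, it is doubtful that the purely numerical data you propose to use can suffice, because in several cases the Gram data are lattice-realizable and the paper's contradictions are geometric rather than numerical. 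Concretely, for the $(0,-6)$ and $(1,-4)$ components the paper invokes the structure theorem of \cite{ES8} --- an Enriques surface with eight nodes is $(D_1\times D_2)/G$ with $D_i$ elliptic and $G=\mathbb{Z}_2^2$ or $\mathbb{Z}_2^3$ --- and derives contradictions because a component of the preimage of the image curve in $D_1\times D_2$ would have to be rational (impossible on an abelian surface), respectively would have to be smooth (an elliptic curve mapping into an abelian surface) although its image is singular. For the three reducible cases, the paper uses the elliptic fibration defined by $|\tilde{\Gamma}_2|$ or $|2\tilde{\Gamma}_2|$ (Proposition 3.1.2 and Theorem 5.6.2 of \cite{ESI}), with the eight nodal curves forced into two non-multiple fibres $\tilde{N}_{2}+\cdots+\tilde{N}_{5}+2E_{1}$ and $\tilde{N}_{6}+\cdots+\tilde{N}_{9}+2E_{2}$, and then Hurwitz's formula, computations of $h^0(\tilde{\Gamma}_1)$, and a connectedness argument for members of $|\tilde{\Gamma}_1|$ --- all of which exploit effectivity, irreducibility and the fibre structure, i.e.\ information invisible to the intersection matrix. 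Until these five eliminations are actually carried out, the central claim that $B_0=\substack{\Gamma_{0}\\(3,0)}+\substack{\Gamma_{1}\\(1,-2)}$ or $\substack{\Gamma_{0}\\(3,-2)}$ remains unproved in your proposal.
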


Suppose $W$ is birational an Enriques surface. Then by Theorem
\ref{theorem:k}, we have $k=9$ and $K_{W}^{2}=-2$. Consider the
contraction maps:
\[W\stackrel{\varphi_{1}}{\longrightarrow}W_{1}\stackrel{\varphi_{2}}{\longrightarrow}W',\]
where $\bar{E}_{1}$ is a $(-1)$-curve on $W$, $\bar{E}_{2}$ is
a $(-1)$-curve on $W_{1}$, $\varphi_{i}$ is the contraction of the
$(-1)$-curve $\bar{E}_{i}$, and $W'$ is an Enriques surface.

\begin{lemma}\label{lemma1:k=9}
i) $N_{i}\cap\bar{E}_{1}\neq\varnothing$ for some $i\in\{1,2,\dots,9\}$.\\
ii) $N_{1}\bar{E}_{1}=1$ after relabeling $\{N_{1},\dots,N_{9}\}$.\\
iii) $N_{s}\bar{E}_{1}=0$ for all $s\in\{2,\dots,9\}$.
\end{lemma}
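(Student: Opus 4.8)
The plan is to exploit the factorization $W\xrightarrow{\varphi_1}W_1\xrightarrow{\varphi_2}W'$ together with two facts. First, each $N_i$ is $K_W$-trivial: since $N_i$ is a $(-2)$-curve, $K_WN_i=2p_a(N_i)-2-N_i^2=0$. Second, $W_1$ is a one-point blow-up of the Enriques surface $W'$, so $p_g(W_1)=q(W_1)=0$, $\kappa(W_1)=0$ and $\rho(W_1)=\rho(W)-1=11$; thus Lemma \ref{lemma:RSMN1} controls the disjoint nodal curves it can carry. The strategy is to first dispose of (i) by contracting $\bar{E}_1$, and then to pin down (ii) and (iii) by writing $K_W$ as an explicit effective divisor supported on the exceptional locus and pairing it with the $N_i$.

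For (i) I would argue by contradiction: assume $N_i\bar{E}_1=0$ for every $i$. Then $\varphi_1$ is an isomorphism near each $N_i$, so the nine images $\varphi_{1*}N_i$ are pairwise disjoint $(-2)$-curves on $W_1$. Lemma \ref{lemma:RSMN1}(i) then forces $9\le\rho(W_1)-2=9$, so equality holds, and Lemma \ref{lemma:RSMN1}(ii) makes $W_1$ minimal (the fake-projective-plane alternative is excluded since $\kappa(W_1)=0$). This contradicts the presence of the $(-1)$-curve $\bar{E}_2$ on $W_1$, proving (i).

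For (ii) and (iii), since $K_{W'}\sim 0$ I would pull back to get $K_{W_1}\sim\bar{E}_2$ and hence $K_W\sim\varphi_1^*\bar{E}_2+\bar{E}_1$. Writing $c\in\{0,1\}$ for the multiplicity of $\bar{E}_2$ at the center $q_1=\varphi_1(\bar{E}_1)$, one has $\varphi_1^*\bar{E}_2=\widetilde{\bar{E}}_2+c\,\bar{E}_1$, so $K_W\sim\widetilde{\bar{E}}_2+(c+1)\bar{E}_1$, where $\widetilde{\bar{E}}_2$ is the strict transform of $\bar{E}_2$ and $\widetilde{\bar{E}}_2^2=-1-c$, $\bar{E}_1\widetilde{\bar{E}}_2=c$. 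Pairing with each $N_i$ gives $\widetilde{\bar{E}}_2N_i+(c+1)(N_i\bar{E}_1)=0$. For any $N_i\neq\widetilde{\bar{E}}_2$ both summands are nonnegative, forcing $N_i\bar{E}_1=0$; by (i) at least one $N_i$ meets $\bar{E}_1$, so that curve must coincide with $\widetilde{\bar{E}}_2$. Substituting $N_i=\widetilde{\bar{E}}_2$ into the identity yields $(c+1)(c-1)=0$, hence $c=1$ (the two centers are infinitely near). Relabel this curve as $N_1$; then $N_1\bar{E}_1=\bar{E}_1\widetilde{\bar{E}}_2=1$, giving (ii), while $N_s\bar{E}_1=0$ for all $s\ge 2$, giving (iii).

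The main obstacle I anticipate is the bookkeeping in the last paragraph: one must track correctly the multiplicity of $\bar{E}_1$ in the effective representative of $K_W$ and verify that the equation $\widetilde{\bar{E}}_2N_i+(c+1)(N_i\bar{E}_1)=0$ can only be satisfied (given (i)) when one of the resolution curves $N_i$ is precisely $\widetilde{\bar{E}}_2$, which simultaneously forces the infinitely-near configuration $c=1$ and the exact multiplicity $N_1\bar{E}_1=1$. The sign condition $\widetilde{\bar{E}}_2N_i\ge 0$ for $N_i\neq\widetilde{\bar{E}}_2$ is what rules out any higher multiplicity or a second nodal curve meeting $\bar{E}_1$.
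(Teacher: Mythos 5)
Your proof is correct. Part (i) is exactly the paper's argument: push the nine disjoint nodal curves down to $W_1$, apply Lemma \ref{lemma:RSMN1} with $\rho(W_1)=11$ to force $9=\rho(W_1)-2$ and hence minimality of $W_1$, contradicting the existence of $\bar{E}_2$. For (ii) and (iii), however, you take a genuinely different route. The paper works downstairs, on the targets of the contractions: for (ii) it assumes $\alpha=N_1\bar{E}_1\ge 2$, notes $(\varphi_1(N_1))^2=-2+\alpha^2>0$ so that the image on $W'$ is a curve, and gets a contradiction because that image would have $K_{W'}$-degree at most $\varphi_1(N_1)K_{W_1}=-\alpha<0$ while $2K_{W'}\equiv 0$; for (iii) it observes that a second nodal curve meeting $\bar{E}_1$ would create two distinct, mutually intersecting $(-1)$-curves on $W_1$, which is impossible since $K_{W'}$ is nef (equivalently, $\bar{E}_2$ is the unique $(-1)$-curve on $W_1$). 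You instead work upstairs: using $K_{W'}\sim 0$ you write $K_W$ numerically as the effective divisor $\widetilde{\bar{E}}_2+(c+1)\bar{E}_1$ supported on the exceptional locus of $W\rightarrow W'$, pair it with the $K_W$-trivial curves $N_i$, and let nonnegativity of intersections of distinct irreducible curves do all the work, yielding (ii) and (iii) simultaneously. Both mechanisms rest on the numerical triviality and nefness of $K_{W'}$, but your computation buys strictly more: it identifies $N_1$ as the strict transform of $\bar{E}_2$ with $c=1$ (the two centers are infinitely near), i.e. $K_W\sim\varphi_1^*\circ\varphi_2^*(K_{W'})+N_1+2\bar{E}_1$, which is precisely the identity the paper has to re-derive from this lemma at the start of the proof of Lemma \ref{criterion of alpha}; the paper's argument, in exchange, avoids all multiplicity bookkeeping. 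Two small points you should make explicit: $c\in\{0,1\}$ holds because $\bar{E}_2$ is smooth (alternatively, writing $\widetilde{\bar{E}}_2^2=-1-c^2$ your final equation forces $c=1$ regardless), and the nonnegativity step also uses $N_i\neq\bar{E}_1$, which is clear from $N_i^2=-2\neq -1=\bar{E}_1^2$.
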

\begin{proof} $i)$ Suppose that $N_{i}\cap\bar{E}_{1}=\varnothing$ for all $i=1,\dots,9$.
Let $A$ be the number of disjoint $(-2)$-curves on $W_{1}$. Then by Lemma \ref{lemma:RSMN1}
$(i)$, $9\le A\le \rho(W_{1})-2=9$. Thus $A=9$ and $W_{1}$ should be a minimal surface
by Lemma \ref{lemma:RSMN1} $(ii)$. This is a contradiction because $W_{1}$ is not minimal.
Hence $N_{i}\cap\bar{E}_{1}\neq\varnothing$ for some $i\in\{1,2,\dots,9\}$.\\
$ii)$ By part $i)$ we may choose a $(-2)$-curve $N_{1}$ such that $N_{1}\bar{E}_{1}=\alpha>0.$
Then $(\varphi_{1}(N_{1}))^{2}=-2+\alpha^{2}$ and $\varphi_{1}(N_{1})K_{W_{1}}=-\alpha$.
We claim that $\alpha$ must be 1. Indeed, suppose $\alpha\geq2$, then $(\varphi_{1}(N_{1}))^{2}>0$,
so $\varphi_{2}\circ\varphi_{1}(N_{1})$ is a curve on $W'$.
Moreover, $\varphi_{2}\circ\varphi_{1}(N_{1})K_{W'}\leq\varphi_{1}(N_{1})K_{W_{1}}$.
But the left side is zero because $2K_{W'}\equiv 0$ and the right side is negative
because $\varphi_{1}(N_{1})K_{W_{1}}=-\alpha$ by our assumption.
This is a contradiction, thus $\alpha=1$.\\
$iii)$ Suppose that $N_{s}\bar{E}_{1}\neq0$ for some
$s\in\{2,\dots,9\}$. Then $W_{1}$ would contain a pair of $(-1)$-curves with nonempty intersection. This is
impossible because $K_{W'}$ is nef. Hence $N_{s}\bar{E}_{1}=0$ for
all $s\in\{2,\dots,9\}$. \end{proof}

In this situation, consider an irreducible nonsingular curve $\Gamma$ disjoint to $N_{1}$
and such that $\bar{E}_{1}\Gamma=\beta$. Then we obtain the following.

\begin{lemma}\label{criterion of alpha}
$2p_{a}(\Gamma)-2=\Gamma^{2}+2\beta$.
\end{lemma}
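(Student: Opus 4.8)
The plan is to derive the identity from the adjunction (genus) formula on the smooth surface $W$, combined with the canonical bundle formula for the two successive blow-downs $\varphi_1,\varphi_2$ and the numerical triviality of the canonical class of the Enriques surface $W'$. Since $\Gamma$ is a nonsingular irreducible curve on $W$, adjunction gives $2p_{a}(\Gamma)-2=\Gamma(\Gamma+K_{W})=\Gamma^{2}+K_{W}\Gamma$. Comparing with the asserted equality, it therefore suffices to prove the single intersection-number identity $K_{W}\Gamma=2\beta$, where $\beta=\bar{E}_{1}\Gamma$.

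To compute $K_{W}\Gamma$ I would push the canonical class down to $W'$. Writing $\pi:=\varphi_{2}\circ\varphi_{1}\colon W\to W'$, the blow-up canonical bundle formula applied twice gives $K_{W_{1}}\equiv\varphi_{2}^{*}K_{W'}+\bar{E}_{2}$ and hence $K_{W}\equiv\pi^{*}K_{W'}+\varphi_{1}^{*}\bar{E}_{2}+\bar{E}_{1}$. Because $W'$ is an Enriques surface, $2K_{W'}\equiv 0$, so $K_{W'}$ is numerically trivial and $\pi^{*}K_{W'}\cdot\Gamma=K_{W'}\cdot\pi_{*}\Gamma=0$. Thus $K_{W}\Gamma=\bar{E}_{1}\Gamma+\varphi_{1}^{*}\bar{E}_{2}\cdot\Gamma=\beta+\varphi_{1}^{*}\bar{E}_{2}\cdot\Gamma$, and the whole computation reduces to evaluating $\varphi_{1}^{*}\bar{E}_{2}\cdot\Gamma$.

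The decisive point is to identify the second exceptional curve. By Lemma \ref{lemma1:k=9} we have $N_{1}\bar{E}_{1}=1$, so $\varphi_{1}(N_{1})$ is a $(-1)$-curve on $W_{1}$; since $W'$ is minimal it cannot contain a $(-1)$-curve, which forces $\varphi_{1}(N_{1})$ to be the curve contracted by $\varphi_{2}$, that is $\bar{E}_{2}=\varphi_{1}(N_{1})$. Equivalently, as divisors on $W$, $\varphi_{1}^{*}\bar{E}_{2}\equiv N_{1}+\bar{E}_{1}$ (the total transform of $\varphi_{1}(N_{1})$, using $N_{1}\bar{E}_{1}=1$). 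Since $\Gamma$ is disjoint from $N_{1}$, I then get $\varphi_{1}^{*}\bar{E}_{2}\cdot\Gamma=(N_{1}+\bar{E}_{1})\Gamma=N_{1}\Gamma+\bar{E}_{1}\Gamma=0+\beta=\beta$. Combining, $K_{W}\Gamma=\beta+\beta=2\beta$, and substituting into adjunction yields $2p_{a}(\Gamma)-2=\Gamma^{2}+2\beta$.

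The step I expect to be the main obstacle is precisely this identification $\bar{E}_{2}=\varphi_{1}(N_{1})$ (equivalently $\varphi_{1}^{*}\bar{E}_{2}\equiv N_{1}+\bar{E}_{1}$): one must rule out that $\varphi_{2}$ contracts some other $(-1)$-curve on $W_{1}$ leaving the image of $N_{1}$ behind. This is where the minimality of the Enriques surface $W'$ together with the intersection data of Lemma \ref{lemma1:k=9} (namely $N_{1}\bar{E}_{1}=1$ and $N_{s}\bar{E}_{1}=0$ for $s\ge 2$, which keep $\varphi_{1}(N_{2}),\dots,\varphi_{1}(N_{9})$ as eight disjoint $(-2)$-curves unaffected by the contractions) are essential. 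Once this identification is secured, the remaining computation is a routine application of the projection formula and the disjointness $\Gamma N_{1}=0$.
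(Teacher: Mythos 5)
Your proof is correct and takes essentially the same route as the paper: adjunction on $W$ combined with the decomposition $K_{W}\equiv\varphi_{1}^{*}\circ\varphi_{2}^{*}(K_{W'})+N_{1}+2\bar{E}_{1}$, the numerical triviality of $K_{W'}$ (from $2K_{W'}\equiv0$), and the disjointness $N_{1}\Gamma=0$. The only difference is expository: you make explicit (and justify via minimality of $W'$) the identification $\bar{E}_{2}=\varphi_{1}(N_{1})$, i.e. $\varphi_{1}^{*}\bar{E}_{2}\equiv N_{1}+\bar{E}_{1}$, which the paper uses implicitly when it invokes Lemma \ref{lemma1:k=9}.
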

\begin{proof} By Lemma \ref{lemma1:k=9},
\[K_{W}\equiv\varphi_{1}^{*}(K_{W_{1}})+\bar{E}_{1}\equiv
\varphi_{1}^{*}(\varphi_{2}^{*}(K_{W'})+\bar{E}_{2})
+\bar{E}_{1}\equiv\varphi_{1}^{*}\circ\varphi_{2}^{*}(K_{W'})+N_{1}+2\bar{E}_{1}.\]
So
$K_{W}\Gamma=\varphi_{1}^{*}\circ\varphi_{2}^{*}(K_{W'})\Gamma+N_{1}\Gamma
+2\bar{E}_{1}\Gamma=2\beta$ since $2K_{W'}\equiv0$ and $N_{1}$ and $\Gamma$ are disjoint.
Thus we get $2p_{a}(\Gamma)-2=\Gamma^{2}+2\beta$. \end{proof}

By referring to Table $1.$ of Section \ref{CB:k=9} with respect to
$K^{2}_{W}=-2$ and $k=9$, we obtain a list of possible branch curves
$B_{0}$. Then we can consider $\Gamma$ as one of the components
$\Gamma_{i}$ in the $B_{0}$. The possibilities for $\Gamma$ which we will consider are:
\[(0,-4), (2,-2), (2,0), (1,-2), (0,-6), (3,2),
(1,-4).\] We treat each case separately.
\begin{itemize}
\item[a)] The case $\Gamma\colon(0,-4)$\\
By Lemma \ref{criterion of alpha}, $\beta=1$. Thus $W'$ should
contain nine disjoint $(-2)$-curves. This is a contradiction
because $W'$ can contain at most eight disjoint $(-2)$-curves since it is an Enriques surface.
\end{itemize}

From now on, we consider the nodal Enriques surface $\Sigma'$
obtained by contracting eight $(-2)$-curves $\tilde{N_{i}}$,
$i=2,\dots,9$, where
$\tilde{N_{i}}:=\varphi_{2}\circ\varphi_{1}(N_{i})$ on $W'$. The
surface $\Sigma'$ has eight nodes $q_{i}$, $i=2,\dots,9$ and
$\tilde{\Gamma}_{\Sigma'}$ which is image of $\tilde{\Gamma}$, where
$\tilde{\Gamma}:=\varphi_{2}\circ\varphi_{1}(\Gamma)$ on $W'$. By
Theorem $4.1$ in \cite{ES8}, $\Sigma'=D_{1}\times D_{2}/G$, where
$D_{1}$ and $D_{2}$ are elliptic curves and $G$ is a finite group
$\mathbb{Z}_{2}^{2}$ or $\mathbb{Z}_{2}^{3}$. Let $p$ be the
quotient map $D_{1}\times D_{2}\longrightarrow D_{1}\times
D_{2}/G=\Sigma'$. The map $p$ is \'etale outside the preimage of
nodes $q_{i}$ on $\Sigma'$, and we note that
$\tilde{\Gamma}_{\Sigma'}$ does not meet with any eight nodes
$q_{i}$ on $\Sigma'$. We write $\hat{\Gamma}_{D_{1}\times D_{2}}$
for a component of $p^{-1}(\tilde{\Gamma}_{\Sigma'})$.
\begin{itemize}
\item[b)] The case $\Gamma\colon(0,-6)$\\
By Lemma \ref{criterion of alpha}, $\beta=2$. So $\tilde{\Gamma}$ is (2,2).
Then the normalization $\hat{\Gamma}^{nor}$ of $\hat{\Gamma}_{D_{1}\times D_{2}}$
is a smooth rational curve since $p_{a}(\Gamma)=0$ and $\Gamma$ is smooth.

Let $pr_{i}$ be the projection map $D_{1}\times D_{2}\longrightarrow
D_{i}$. Then this induces morphisms
$p_{i}\colon\hat{\Gamma}^{nor}\longrightarrow D_{i}$ which factors
through $pr_{i}|_{\hat{\Gamma}_{D_{1}\times D_{2}}}$. Then since $\hat{\Gamma}_{D_{1}\times D_{2}}$ is a
curve on $D_{1}\times D_{2}$, $p_{i}$ should be a surjective
morphism for some $i\in\{1,2\}$. However, this is impossible because
$p_{g}(\hat{\Gamma}^{nor})=0$ and $p_{g}(D_{i})=1$.

\item[c)] The case $\Gamma\colon(1,-4)$\\
By Lemma \ref{criterion of alpha}, $\beta=2$. So
$\tilde{\Gamma}$ is (3,4). Then the normalization
$\hat{\Gamma}^{nor}$ of $\hat{\Gamma}_{D_{1}\times D_{2}}$ is a
smooth elliptic curve because $p_{a}(\Gamma)=1$ and $\Gamma$ is
smooth. Thus $\hat{\Gamma}^{nor}\longrightarrow D_{1}\times D_{2}$
is a morphism of Abelian varieties and so must be linear, which
implies that $\hat{\Gamma}_{D_{1}\times D_{2}}$ is smooth. Thus
$\tilde{\Gamma}_{\Sigma'}$ is also smooth because
$\tilde{\Gamma}_{\Sigma'}$ does not meet any of the eight nodes
$q_{i}$ on $\Sigma'$ and $p$ is \'etale on away from the nodes
$q_{i}$. This is a contradiction since we assumed
$\tilde{\Gamma}_{\Sigma'}$ to be singular.

\item[d)] The case
$\substack{\Gamma_{0}\\(3,2)}+\substack{\Gamma_{1}\\(1,-2)}+\substack{\Gamma_{2}\\(1,-2)}$\\
By Lemma \ref{criterion of alpha}, we have
$\bar{E}_{1}\Gamma_{i}=1$ for $i=0,1,2$. So we get
$\tilde{\Gamma}_{0}\colon(3,4)$, $\tilde{\Gamma}_{1}\colon(1,0)$,
$\tilde{\Gamma}_{2}\colon(1,0)$ and
$\tilde{\Gamma}_{i}\tilde{\Gamma}_{j}=2$ for $i\neq j$ on the
Enriques surface $W'$. Now, we apply Proposition $3.1.2$ of
\cite{ESI} to the curve $\tilde{\Gamma}_{2}$. Then one of the linear
systems $|\tilde{\Gamma}_{2}|$ or $|2\tilde{\Gamma}_{2}|$ gives an
elliptic fibration $f\colon W'\longrightarrow\mathbb{P}^{1}$. So we
have the reducible non-multiple degenerate fibres
$\tilde{T}_{1}(=\tilde{N}_{2}+\tilde{N}_{3}+\tilde{N}_{4}+\tilde{N}_{5}+2E_{1})$
and
$\tilde{T}_{2}(=\tilde{N}_{6}+\tilde{N}_{7}+\tilde{N}_{8}+\tilde{N}_{9}+2E_{2})$
of $f$ by Theorem $5.6.2$ of \cite{ESI}, since $W'$ has eight
disjoint $(-2)$-curves. Moreover, $f$ has two double fibres $2F_{1}$
and $2F_{2}$ since $W'$ is an Enriques surface.

\begin{enumerate}
\item
Suppose $|\tilde{\Gamma}_{2}|$ determines the elliptic fibration.
Then $\tilde{\Gamma}_{2}$ is a fibre of $f$. Since
$\tilde{\Gamma}_{1}\tilde{\Gamma}_{2}=2$ (they meet at a point with
multiplicity 2), $2F_{1}\tilde{\Gamma}_{1}=2$,
$2F_{2}\tilde{\Gamma}_{1}=2$ and $\tilde{T}_{i}\tilde{\Gamma}_{1}=2$
for $i=1, 2$, we apply Hurwitz's formula to the covering
$f|_{\tilde{\Gamma}_{1}}\colon
\tilde{\Gamma}_{1}\longrightarrow\mathbb{P}^{1}$ to obtain
\[0=2p_{g}(\tilde{\Gamma}_{1})-2\ge2(-2)+5=1\] which is impossible.

\item
Suppose $|2\tilde{\Gamma}_{2}|$ determines the elliptic fibration.
Then $2\tilde{\Gamma}_{2}$ is a fibre of $f$. Since
$2F_{1}\tilde{\Gamma}_{1}(=2\tilde{\Gamma}_{2}\tilde{\Gamma}_{1})=4$,
$2F_{2}\tilde{\Gamma}_{1}=4$ and $\tilde{T}_{i}\tilde{\Gamma}_{1}=4$
for $i=1, 2$, we apply Hurwitz's formula to the covering
$f|_{\tilde{\Gamma}_{1}}\colon\tilde{\Gamma}_{1}\longrightarrow\mathbb{P}^{1}$
to obtain \[0=2p_{g}(\tilde{\Gamma}_{1})-2\ge4(-2)+3+2+2+2=1,\]
which is impossible.
\end{enumerate}

\item[e)] The case
$\substack{\Gamma_{0}\\(2,0)}+\substack{\Gamma_{1}\\(2,0)}+\substack{\Gamma_{2}\\(1,-2)}$\\
By Lemma \ref{criterion of alpha}, $\bar{E}_{1}\Gamma_{i}=1$ for $i=0,1,2$. So we have
$\tilde{\Gamma}_{0}\colon(2,2)$, $\tilde{\Gamma}_{1}\colon(2,2)$,
$\tilde{\Gamma}_{2}\colon(1,0)$ and $\tilde{\Gamma}_{i}\tilde{\Gamma}_{j}=2$ for $i\neq j$
on the Enriques surface $W'$.
\begin{lemma}\label{hgamma0=2}
$h^{0}(W',\mathcal{O}_{W'}(\tilde{\Gamma}_{1}))=2.$
\end{lemma}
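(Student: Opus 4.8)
The plan is to reduce the statement to a Riemann--Roch computation plus a single cohomology vanishing, isolate the one case that is genuinely hard, and dispose of the rest formally. I use throughout that $\tilde{\Gamma}_{1}$ is irreducible with $p_{a}(\tilde{\Gamma}_{1})=2$ and $\tilde{\Gamma}_{1}^{2}=2$, and that on the Enriques surface $W'$ one has $\chi(\mathcal{O}_{W'})=1$, $q(W')=p_{g}(W')=0$, and $K_{W'}$ numerically trivial with $2K_{W'}\equiv 0$. First I would note, by adjunction, that $K_{W'}\tilde{\Gamma}_{1}=2p_{a}(\tilde{\Gamma}_{1})-2-\tilde{\Gamma}_{1}^{2}=0$, so Riemann--Roch gives $\chi(\mathcal{O}_{W'}(\tilde{\Gamma}_{1}))=1+\frac{1}{2}\tilde{\Gamma}_{1}(\tilde{\Gamma}_{1}-K_{W'})=2$. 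Since $\tilde{\Gamma}_{1}$ is an irreducible curve with $\tilde{\Gamma}_{1}^{2}>0$ it is nef and big, and as $(K_{W'}-\tilde{\Gamma}_{1})\tilde{\Gamma}_{1}=-2<0$ the class $K_{W'}-\tilde{\Gamma}_{1}$ is not effective; hence $h^{2}(\mathcal{O}_{W'}(\tilde{\Gamma}_{1}))=h^{0}(\mathcal{O}_{W'}(K_{W'}-\tilde{\Gamma}_{1}))=0$. Therefore $h^{0}(\mathcal{O}_{W'}(\tilde{\Gamma}_{1}))=2+h^{1}(\mathcal{O}_{W'}(\tilde{\Gamma}_{1}))$, and the entire content of the lemma is the vanishing $h^{1}(\mathcal{O}_{W'}(\tilde{\Gamma}_{1}))=0$.

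Next I would exploit the companion curve $\tilde{\Gamma}_{0}$, which is also of type $(2,2)$ and satisfies $\tilde{\Gamma}_{0}\tilde{\Gamma}_{1}=2$. The difference $\tilde{\Gamma}_{0}-\tilde{\Gamma}_{1}$ then has square $0$ and is orthogonal to the nef and big class $\tilde{\Gamma}_{1}$, so the Hodge index theorem forces $\tilde{\Gamma}_{0}\sim\tilde{\Gamma}_{1}$ numerically. Since the torsion of $\Pic(W')$ is generated by $K_{W'}$, either $\tilde{\Gamma}_{0}\equiv\tilde{\Gamma}_{1}+K_{W'}$ or $\tilde{\Gamma}_{0}\equiv\tilde{\Gamma}_{1}$. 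In the first case $\tilde{\Gamma}_{1}\equiv\tilde{\Gamma}_{0}+K_{W'}$ (using $2K_{W'}\equiv 0$), and Kawamata--Viehweg vanishing applied to the nef and big divisor $\tilde{\Gamma}_{0}$ gives $h^{0}(\mathcal{O}_{W'}(\tilde{\Gamma}_{1}))=h^{0}(\mathcal{O}_{W'}(K_{W'}+\tilde{\Gamma}_{0}))=\chi(\mathcal{O}_{W'}(\tilde{\Gamma}_{0}))=2$, which already proves the lemma. So I may assume the delicate linear equivalence $\tilde{\Gamma}_{0}\equiv\tilde{\Gamma}_{1}$.

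In this remaining case I would identify the obstruction cohomologically. The restriction sequence $0\to\mathcal{O}_{W'}\to\mathcal{O}_{W'}(\tilde{\Gamma}_{1})\to\mathcal{O}_{\tilde{\Gamma}_{1}}(\tilde{\Gamma}_{1})\to 0$, together with $q(W')=p_{g}(W')=0$, yields $H^{1}(\mathcal{O}_{W'}(\tilde{\Gamma}_{1}))\cong H^{1}(\mathcal{O}_{\tilde{\Gamma}_{1}}(\tilde{\Gamma}_{1}))$; by adjunction and Serre duality on the Gorenstein curve $\tilde{\Gamma}_{1}$ this is dual to $H^{0}(\tilde{\Gamma}_{1},K_{W'}|_{\tilde{\Gamma}_{1}})$, where $K_{W'}|_{\tilde{\Gamma}_{1}}$ is a $2$-torsion line bundle of degree $0$. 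As $\tilde{\Gamma}_{1}$ is connected, $h^{1}(\mathcal{O}_{W'}(\tilde{\Gamma}_{1}))=0$ is therefore \emph{equivalent} to the nontriviality of $K_{W'}|_{\tilde{\Gamma}_{1}}$, i.e.\ to the statement that $\tilde{\Gamma}_{1}$ does not split in the K3 double cover of $W'$, equivalently that $h^{0}(\mathcal{O}_{W'}(\tilde{\Gamma}_{1}))\neq 3$.

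Proving this nontriviality is the heart of the matter, and I expect it to be the real obstacle; notably the naive argument fails. Were $h^{0}(\mathcal{O}_{W'}(\tilde{\Gamma}_{1}))=3$, the moving part of $|\tilde{\Gamma}_{1}|$ would give a generically two-to-one morphism onto $\mathbb{P}^{2}$, and an Enriques surface \emph{can} be such a (nodal) double plane, so one cannot merely invoke irrationality. To close the gap I would feed in the extra structure available in case e): the third component $\tilde{\Gamma}_{2}$ of type $(1,0)$ with $\tilde{\Gamma}_{1}\tilde{\Gamma}_{2}=2$, the eight contracted nodal curves $\tilde{N}_{i}$ disjoint from $\tilde{\Gamma}_{1}$ (so that any such double plane would factor through $\Sigma'$), and above all the explicit model $\Sigma'=D_{1}\times D_{2}/G$ with $p\colon D_{1}\times D_{2}\to\Sigma'$ \'etale over $\tilde{\Gamma}_{\Sigma'}$. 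Pulling $\tilde{\Gamma}_{1}$ back along $p$ and analysing whether a component of $p^{-1}(\tilde{\Gamma}_{\Sigma'})$ stays connected computes $K_{W'}|_{\tilde{\Gamma}_{1}}$ directly; combined with the elliptic fibration induced by $\tilde{\Gamma}_{2}$ as in case d), I expect the product structure to rule out the splitting and hence to force $h^{0}(\mathcal{O}_{W'}(\tilde{\Gamma}_{1}))=2$.
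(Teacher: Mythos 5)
Your reduction to $h^{1}(W',\mathcal{O}_{W'}(\tilde{\Gamma}_{1}))=0$ is correct (Riemann--Roch gives $\chi=2$, and $h^{2}=0$ since $(K_{W'}-\tilde{\Gamma}_{1})\tilde{\Gamma}_{1}<0$ with $\tilde{\Gamma}_{1}$ nef), and your ``first case'' $\tilde{\Gamma}_{0}\equiv\tilde{\Gamma}_{1}+K_{W'}$ is disposed of correctly by Kawamata--Viehweg. But the proof has a genuine gap: in the remaining case $\tilde{\Gamma}_{0}\equiv\tilde{\Gamma}_{1}$ you never establish the nontriviality of $K_{W'}|_{\tilde{\Gamma}_{1}}$; you only say you \emph{expect} the product model $\Sigma'=D_{1}\times D_{2}/G$ and the elliptic fibration from $\tilde{\Gamma}_{2}$ to rule out the splitting. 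As written, the lemma is unproved in exactly the case you identify as the heart of the matter.

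The irony is that this case is killed by the very mechanism you used in the other one, so the entire case division (the Hodge-index comparison of $\tilde{\Gamma}_{0}$ with $\tilde{\Gamma}_{1}$, and indeed any reference to $\tilde{\Gamma}_{0}$ at all) is unnecessary. Since $2K_{W'}\equiv 0$, one has the \emph{linear} equivalence
\[
\tilde{\Gamma}_{1}\equiv 2K_{W'}+\tilde{\Gamma}_{1}=K_{W'}+\bigl(K_{W'}+\tilde{\Gamma}_{1}\bigr),
\]
and $K_{W'}+\tilde{\Gamma}_{1}$ is nef and big because it is numerically equivalent to $\tilde{\Gamma}_{1}$ (nefness and bigness are numerical properties, and $K_{W'}\sim 0$ on an Enriques surface). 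Kawamata--Viehweg applied to this presentation gives $h^{i}(W',\mathcal{O}_{W'}(\tilde{\Gamma}_{1}))=0$ for $i=1,2$ unconditionally, and Riemann--Roch then yields $h^{0}=\chi=2$. This is precisely the paper's proof. In particular, what you single out as the real obstacle --- the nontriviality of the $2$-torsion bundle $K_{W'}|_{\tilde{\Gamma}_{1}}$ --- is not an input to be verified via the quotient model but an automatic consequence of this vanishing, valid for any irreducible curve of positive self-intersection on an Enriques surface; no appeal to $\tilde{\Gamma}_{2}$, the eight nodal curves, or $D_{1}\times D_{2}/G$ is needed.
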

\begin{proof}Since $2K_{W'}\equiv0$ and $K_{W'}+\tilde{\Gamma}_{1}$ is nef
and big,
\begin{eqnarray}
h^{i}(W',\mathcal{O}_{W'}(\tilde{\Gamma}_{1}))
&=&h^{i}(W',\mathcal{O}_{W'}(2K_{W'}+\tilde{\Gamma}_{1}))\nonumber\\
&=&h^{i}(W',\mathcal{O}_{W'}(K_{W'}+(K_{W'}+\tilde{\Gamma}_{1})))\nonumber\\
&=&0\nonumber
\end{eqnarray}
for $i=1,2$ by Kawamata-Viehweg Vanishing Theorem. Thus
\[h^{0}(W',\mathcal{O}_{W'}(\tilde{\Gamma}_{1}))=2\]
by Riemann-Roch Theorem.\end{proof}

\begin{lemma}\label{connectedness}
Let $T$ be a nef and big divisor on $W'$. \\
Then any divisor $U$ in a linear system $|T|$ is connected.
\end{lemma}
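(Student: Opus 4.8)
The plan is to reduce the connectedness of $U$ to the vanishing of $H^1(W',\mathcal{O}_{W'}(-U))$ through the structure sequence of $U$, and then to obtain that vanishing from Kawamata--Viehweg vanishing together with Serre duality. First I would record that, since $U$ is linearly equivalent to the nef and big divisor $T$, it is itself an effective, nef and big divisor; in particular $U^{2}>0$ and $U$ is nef, so $-U$ cannot be effective and $H^{0}(W',\mathcal{O}_{W'}(-U))=0$.

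Next I would use the exact sequence
\[0\to\mathcal{O}_{W'}(-U)\to\mathcal{O}_{W'}\to\mathcal{O}_{U}\to0\]
and its associated long exact cohomology sequence. Because $H^{0}(W',\mathcal{O}_{W'}(-U))=0$ and $H^{0}(W',\mathcal{O}_{W'})=\mathbb{C}$, this yields an injection $\mathbb{C}\hookrightarrow H^{0}(U,\mathcal{O}_{U})$ whose cokernel embeds into $H^{1}(W',\mathcal{O}_{W'}(-U))$. Hence it is enough to prove $H^{1}(W',\mathcal{O}_{W'}(-U))=0$: then $H^{0}(U,\mathcal{O}_{U})=\mathbb{C}$, which forces $U$ to be connected, since a nontrivial splitting $U=U_{1}\sqcup U_{2}$ into disjoint nonempty closed subschemes would give $H^{0}(U,\mathcal{O}_{U})=H^{0}(U_{1},\mathcal{O}_{U_{1}})\oplus H^{0}(U_{2},\mathcal{O}_{U_{2}})$, of dimension at least $2$.

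For the vanishing I would invoke Serre duality on the Enriques surface $W'$ to identify $H^{1}(W',\mathcal{O}_{W'}(-U))$ with the dual of $H^{1}(W',\mathcal{O}_{W'}(K_{W'}+U))$, and then apply the Kawamata--Viehweg Vanishing Theorem to the nef and big divisor $U$, exactly as in the proof of Lemma \ref{hgamma0=2}, to conclude $H^{1}(W',\mathcal{O}_{W'}(K_{W'}+U))=0$. The argument is essentially formal once this vanishing is in place, and I do not anticipate a serious obstacle; the only points that require a word of care are the passage from $h^{0}(\mathcal{O}_{U})=1$ to topological connectedness, and the use of the full nef-and-big hypothesis (rather than mere effectivity), which is precisely what simultaneously kills $H^{0}(W',\mathcal{O}_{W'}(-U))$ and supplies the Kawamata--Viehweg input.
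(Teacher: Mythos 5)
Your proof is correct and follows essentially the same route as the paper's: the paper's own argument is just the structure sequence $0\to\mathcal{O}_{W'}(-U)\to\mathcal{O}_{W'}\to\mathcal{O}_{U}\to 0$ followed by the assertion $H^{0}(\mathcal{O}_{W'})\cong H^{0}(\mathcal{O}_{U})$, leaving the vanishing of $H^{0}$ and $H^{1}$ of $\mathcal{O}_{W'}(-U)$ implicit. Your write-up supplies exactly those implicit steps (nefness of $U$ killing $H^{0}(\mathcal{O}_{W'}(-U))$, and Serre duality plus Kawamata--Viehweg killing $H^{1}$), so it is a more detailed rendering of the same argument.
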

\begin{proof}
Consider an exact sequence
\begin{displaymath}
0\longrightarrow \mathcal{O}_{W'}(-U)\longrightarrow \mathcal{O}_{W'}
\longrightarrow \mathcal{O}_{U}\longrightarrow 0.
\end{displaymath}
Then we get $H^{0}(\mathcal{O}_{W'})\cong H^{0}(\mathcal{O}_{U})$
by the long exact sequence for cohomology, and so $U$ is connected.
\end{proof}

Now, we apply Proposition $3.1.2$ of \cite{ESI} to the curve
$\tilde{\Gamma}_{2}$. Then one of the linear systems
$|\tilde{\Gamma}_{2}|$ or $|2\tilde{\Gamma}_{2}|$ gives an elliptic
fibration $f:W'\longrightarrow\mathbb{P}^{1}$. So we have the
reducible non-multiple degenerate fibres
$\tilde{T}_{1}(=\tilde{N}_{2}+\tilde{N}_{3}+\tilde{N}_{4}+\tilde{N}_{5}+2E_{1})$,
$\tilde{T}_{2}(=\tilde{N}_{6}+\tilde{N}_{7}+\tilde{N}_{8}+\tilde{N}_{9}+2E_{2})$
and two double fibres $2F_{1}$, $2F_{2}$ of the fibration $f$.

\begin{enumerate}
\item
Suppose $|\tilde{\Gamma}_{2}|$ determines the elliptic fibration.
Consider an exact sequence
$0\longrightarrow\mathcal{O}_{W'}(\tilde{\Gamma}_{1}-E_{1})\longrightarrow\mathcal{O}_{W'}(\tilde{\Gamma}_{1})
\longrightarrow\mathcal{O}_{E_{1}}(\tilde{\Gamma}_{1})\longrightarrow
0.$ If we assume
$H^{0}(W',\mathcal{O}_{W'}(\tilde{\Gamma}_{1}-E_{1}))\neq0$, then
$\tilde{\Gamma}_{1}\equiv2E_{1}+\tilde{N}_{2}+\tilde{N}_{3}+
\tilde{N}_{4}+\tilde{N}_{5}+G\equiv\tilde{\Gamma}_{2}+G$ for some
effective divisor $G$, and so $p_{a}(G)=0$ because
$\tilde{\Gamma}_{2}G=2$. So there is an irreducible smooth curve $C$
with $p_{a}(C)=0$ (i.e. $C$ is a $(-2)$-curve) as a
component of $G$. We claim $C\tilde{N}_{i}=0$ for $i=2,3,\dots,9$.
Indeed, suppose $C\tilde{N}_{i}>0$ for some $i$, and then
$0=G\tilde{N}_{i}=(H+C)\tilde{N}_{i}$, where $G=H+C$ for some
effective divisor $H$. Since $H\tilde{N}_{i}<0$, $\tilde{N}_{i}$ is
a component of $H$. Thus
$\tilde{\Gamma}_{1}-\tilde{\Gamma}_{2}\equiv G=\tilde{N}_{i}+I$  for
some effective divisor $I$, which is impossible by
$p_{a}(\tilde{\Gamma}_{1})=2$, $p_{a}(\tilde{\Gamma}_{2})=1$,
$\tilde{\Gamma}_{2}I=2$, $\tilde{N}_{i}I=2$ and connectedness among
$\tilde{\Gamma}_{2},\tilde{N}_{i}$ and $I$ induced from Lemma
\ref{connectedness} since $\tilde{\Gamma}_{1}$ is nef and big. On
the other hand, suppose $C\tilde{N}_{i}<0$ for some $i$, then
$C=\tilde{N}_{i}$ because $C$ and $\tilde{N}_{i}$ are irreducible
and reduced. Thus $\tilde{\Gamma}_{1}-\tilde{\Gamma}_{2}\equiv
G=\tilde{N}_{i}+H$ for an effective divisor $H$, which is impossible
by $p_{a}(\tilde{\Gamma}_{1})=2$, $p_{a}(\tilde{\Gamma}_{2})=1$,
$\tilde{\Gamma}_{2}H=2$ and $\tilde{N}_{i}H=2$ and connectedness
among $\tilde{\Gamma}_{2},\tilde{N}_{i}$ and $H$ induced from Lemma
\ref{connectedness} since $\tilde{\Gamma}_{1}$ is nef and big. Hence
we have nine disjoint $(-2)$-curves
$C,\tilde{N}_{2},\dots,\tilde{N}_{9}$, which induce a contradiction
on the Enriques surface $W'$ by Lemma \ref{lemma:RSMN1}. Now, we
have $H^{0}(W',\mathcal{O}_{W'}(\tilde{\Gamma}_{1}-E_{1}))=0$, and
so
\[H^{0}(W',\mathcal{O}_{W'}(\tilde{\Gamma}_{1}))\longrightarrow
H^{0}(E_{1},\mathcal{O}_{E_{1}}(\tilde{\Gamma}_{1}))\] is an
injective map.

Since $h^{0}(W',\mathcal{O}_{W'}(\tilde{\Gamma}_{1}))=2$ and
$h^{0}(E_{1},\mathcal{O}_{E_{1}}(\tilde{\Gamma}_{1}))=2$ (because
$\tilde{\Gamma}_{1}E_{1}=1$),
$\tilde{\Gamma}_{1}\equiv\tilde{N}_{2}+\tilde{\Gamma}_{1}'$ for some
effective divisor $\tilde{\Gamma}_{1}'$; The injectivity of the
above map and $E_1=\mathbb P^1$ imply that the linear system
$\tilde{\Gamma}_{1}$ restricted on $E_1$ should move on $E_1$.
Therefore at least one member of the linear system of
$\tilde{\Gamma}_{1}$ should meet $\tilde N_2$.

Since $\tilde{\Gamma}_{1}$ is a smooth projective curve of genus 2
whose self intersection number is 2, and $\tilde{\Gamma}_{1}'\tilde
N_2=2$, we have $\tilde{\Gamma}_{1}'^2=0$ and
$p_a(\tilde{\Gamma}_{1}')=1$. And we note that
$h^{0}(W',\mathcal{O}_{W'}(\tilde{\Gamma}_{1}'))=1$. Therefore,
$|2\tilde{\Gamma}_{1}'|$ gives an elliptic fibration, and the
special member of $|2\tilde{\Gamma}_{1}'|$ contains $E_1$ because
$\tilde{\Gamma}_{1}'E_1=0$. Then this special member also contains
$\tilde N_3, \tilde N_4, \tilde N_5$ because
$\tilde{\Gamma}_{1}'\tilde N_i=0$ for $i=3, 4, 5$. Since
$|2\tilde{\Gamma}_{1}'|$ gives an elliptic fibration,
\[2\tilde{\Gamma}_{1}'\equiv 2E_1+\tilde N_3+\tilde N_4+ \tilde N_5
+ \tilde N\] where $\tilde N$ is a $(-2)$-curve with $\tilde NE_1=1,
\tilde N\tilde N_j=0$ for all $j=3, 4, 5, 6, 7, 8, 9$. And we get $\tilde N\tilde N_2
=2$ because $\tilde{\Gamma}_{1}'\tilde N_2=2$. Then we see that
$|2(\tilde N+\tilde N_2)|$ gives an elliptic pencil on $W'$.
On the other hand, by the classification of possible singular fibers on an elliptic
pencil on $W'$ (Theorem 5.6.2 in \cite{ESI}, or \cite{ES8}), we have that any elliptic fibration
on $W'$ has no singular fibers of type $2(\tilde N+\tilde N_2)$. We note that
$\tilde N_j$ for all $j=3, 4, 5, 6, 7, 8, 9$ are also on singular fibers.

\item
Suppose $|2\tilde{\Gamma}_{2}|$ determines the elliptic fibration.
Consider an exact sequence
\begin{displaymath}
\textrm{  }\textrm{  }\textrm{  }\textrm{  }
0\longrightarrow\mathcal{O}_{W'}(\tilde{\Gamma}_{1}-E_{1})\longrightarrow\mathcal{O}_{W'}(\tilde{\Gamma}_{1})
\longrightarrow\mathcal{O}_{E_{1}}(\tilde{\Gamma}_{1})\longrightarrow
0.
\end{displaymath}
 If we assume
$H^{0}(W',\mathcal{O}_{W'}(\tilde{\Gamma}_{1}-E_{1}))\neq0$, then
$\tilde{\Gamma}_{1}\equiv
E_{1}+\tilde{N}_{2}+\tilde{N}_{3}+\tilde{N}_{4}+\tilde{N}_{5}+G$ for
some effective divisor $G$ by the same reason as the above. Then it
is impossible by $p_{a}(\tilde{\Gamma}_{1})=2$, $E_{1}G=0$ and
$\tilde{N}_{i}G=1$ for all $i=2,3,4,5$ and connectedness among
$E_{1},\tilde{N}_{2},\tilde{N}_{3},\tilde{N}_{4},\tilde{N}_{5}$ and
$G$ induced from Lemma \ref{connectedness} since
$\tilde{\Gamma}_{1}$ is nef and big. Thus we have
$H^{0}(W',\mathcal{O}_{W'}(\tilde{\Gamma}_{1}-E_{1}))=0$, and so
\begin{displaymath}
H^{0}(W',\mathcal{O}_{W'}(\tilde{\Gamma}_{1}))\longrightarrow
H^{0}(E_{1},\mathcal{O}_{E_{1}}(\tilde{\Gamma}_{1}))
\end{displaymath}
is an injective map.

Since $h^{0}(W',\mathcal{O}_{W'}(\tilde{\Gamma}_{1}))=2$ and
$h^{0}(E_{1},\mathcal{O}_{E_{1}}(\tilde{\Gamma}_{1}))=3$ (because
$\tilde{\Gamma}_{1}E_{1}=2$),
$\tilde{\Gamma}_{1}\equiv\tilde{N}_{2}+\tilde{\Gamma}_{1}'$ for some
effective divisor $\tilde{\Gamma}_{1}'$ by the same reason as the
above. Then it is also impossible by the same argument as the above.
\end{enumerate}

\item[f)] The case $\substack{\Gamma_{0}\\(2,-2)}+\substack{\Gamma_{1}\\(2,0)}$\\
By Lemma \ref{criterion of alpha}, $\bar{E}_{1}\Gamma_{0}=2$ and $\bar{E}_{1}\Gamma_{1}=1$.
So we have $\tilde{\Gamma}_{0}\colon (4,6)$ and $\tilde{\Gamma}_{1}\colon(2,2)$ on the Enriques surface $W'$.

Consider an elliptic fibration of Enriques surface $f\colon
W'\longrightarrow \mathbb{P}^{1}$, and assume
$\tilde{\Gamma}_{1}F=2\gamma$, where $F$ is a general fibre of $f$.
Then $\gamma>0$ because $\tilde{\Gamma}_{1}$ cannot occur in a fibre
of $f$ since $p_a(\tilde\Gamma_1)=2$. Moreover, consider an exact
sequence
\[0\longrightarrow\mathcal{O}_{W'}(\tilde{\Gamma}_{1}-E_{1})\longrightarrow\mathcal{O}_{W'}(\tilde{\Gamma}_{1})
\longrightarrow\mathcal{O}_{E_{1}}(\tilde{\Gamma}_{1})\longrightarrow
0.\] If we assume
$H^{0}(W',\mathcal{O}_{W'}(\tilde{\Gamma}_{1}-E_{1}))\neq0$, then
$\tilde{\Gamma}_{1}\equiv
E_{1}+\tilde{N}_{2}+\tilde{N}_{3}+\tilde{N}_{4}+\tilde{N}_{5}+G$ for
some effective divisor $G$, which is impossible by $p_{a}(\tilde{\Gamma}_{1})=2$, $\tilde{N}_{i}G=1$
for all $i=2,3,4,5$ and connectedness among $E_{1},\tilde{N}_{2},\tilde{N}_{3},\tilde{N}_{4},\tilde{N}_{5}$
and $G$ induced from Lemma \ref{connectedness} since $\tilde{\Gamma}_{1}$ is nef and big. Now, we have
$H^{0}(W',\mathcal{O}_{W'}(\tilde{\Gamma}_{1}-E_{1}))=0$, and so
\begin{displaymath}
H^{0}(W',\mathcal{O}_{W'}(\tilde{\Gamma}_{1}))\longrightarrow
H^{0}(E_{1},\mathcal{O}_{E_{1}}(\tilde{\Gamma}_{1}))
\end{displaymath}
is an injective map. Since
$h^{0}(W',\mathcal{O}_{W'}(\tilde{\Gamma}_{1}))=2$ by Lemma
\ref{hgamma0=2} and
$h^{0}(E_{1},\mathcal{O}_{E_{1}}(\tilde{\Gamma}_{1}))=\gamma+1$
(because $\tilde{\Gamma}_{1}E_{1}=\gamma$),
$\tilde{\Gamma}_{1}\equiv\tilde{N}_{2}+\tilde{\Gamma}_{1}'$ for some
effective divisor $\tilde{\Gamma}_{1}'$ by the same reason as the
previous case. Then it is also impossible by the same argument as
the previous case.
\end{itemize}

Therefore, all other cases except
$B_0=\substack{\Gamma_{0}\\(3,0)}+\substack{\Gamma_{1}\\(1,-2)}$ or
$\substack{\Gamma_{0}\\(3,-2)}$ are
excluded.

\begin{lemma}\label{torsion}
If $W$ is birational to an Enriques surface then $S$ has a $2$-torsion element.
\end{lemma}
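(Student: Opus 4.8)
The plan is to transport the canonical $2$-torsion class of the Enriques surface $W'$ down to $S$ through the diagram, checking at each stage that it remains nonzero. First I would record that $W$ is a smooth projective surface birational to the Enriques surface $W'$, and that the torsion subgroup of the Picard group of a smooth projective surface is a birational invariant (a blow-up only enlarges the free part $\bigoplus \ZZ E_i$). Hence $\Pic(W)_{\mathrm{tors}}\cong \Pic(W')_{\mathrm{tors}}=\ZZ/2$, generated by the pullback $\tau$ of $K_{W'}$; in particular $2\tau\equiv 0$ and $\tau\not\equiv 0$. Pulling back along the double cover then gives a class $\tilde{\pi}^{*}\tau\in\Pic(V)$ with $2\tilde{\pi}^{*}\tau\equiv\tilde{\pi}^{*}(2\tau)\equiv 0$, so $\tilde{\pi}^{*}\tau$ is $2$-torsion.

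The heart of the argument, and the step I expect to be the main obstacle, is to show that $\tilde{\pi}^{*}\tau$ does not vanish, i.e. that $\tau$ is not killed by the double cover. For this I would compute the kernel of $\tilde{\pi}^{*}\colon \Pic(W)\to\Pic(V)$ using the flat structure $\tilde{\pi}_{*}\mathcal{O}_{V}=\mathcal{O}_{W}\oplus\mathcal{O}_{W}(-L)$. If $\tilde{\pi}^{*}M\cong\mathcal{O}_{V}$ for some $M\in\Pic(W)$, then the projection formula gives $M\oplus M(-L)\cong\tilde{\pi}_{*}\mathcal{O}_{V}=\mathcal{O}_{W}\oplus\mathcal{O}_{W}(-L)$, and a Krull--Schmidt comparison of the two line-bundle summands forces either $M\equiv 0$, or $M\equiv L$ together with $2L\equiv 0$. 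Since $2L\equiv\tilde{B}=B_{0}+\sum_{i=1}^{9}N_{i}$ is a nonzero effective divisor on a surface with $h^{0}(\mathcal{O}_{W})=1$, we have $2L\not\equiv 0$; hence $\ker\tilde{\pi}^{*}=0$. As $\tau\not\equiv 0$, this yields $\tilde{\pi}^{*}\tau\not\equiv 0$, so $\tilde{\pi}^{*}\tau$ is a genuine $2$-torsion class on $V$.

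Finally I would descend along the blow-up $\epsilon\colon V\to S$ of the nine isolated fixed points. Because $\epsilon$ is a composition of blow-ups, $\Pic(V)=\epsilon^{*}\Pic(S)\oplus\bigoplus_{i=1}^{9}\ZZ E_{i}$ with $\epsilon^{*}$ injective, so every torsion class of $\Pic(V)$ lies in $\epsilon^{*}\Pic(S)$ and equals $\epsilon^{*}$ of a torsion class of the same order on $S$. Applying this to the nonzero $2$-torsion class $\tilde{\pi}^{*}\tau$ produces a nonzero $2$-torsion element of $\Pic(S)$, as desired.

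The only delicate points are the Krull--Schmidt comparison of summands in the kernel computation and the verification that $2L\not\equiv 0$; both become routine once one uses that $\tilde{B}$ is a nonzero effective divisor and that $W$ (being birational to an Enriques surface) has $h^{0}(\mathcal{O}_{W})=1$. The rest is bookkeeping about how torsion behaves under blow-ups and flat double covers, and requires no case analysis of the specific branch divisors $B_{0}$.
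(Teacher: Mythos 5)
Your proof is correct, and in fact it produces the very same torsion class as the paper's proof, but the crucial nonvanishing step is handled by a genuinely different mechanism. The paper works entirely on the covering side: since $2K_{W'}\equiv 0$, one can write $2K_{W}\equiv 2A$ with $A$ effective (the exceptional locus of $W\to W'$), set $G=\tilde{\pi}^{*}(A)+\tilde{R}$ with $\tilde{R}$ the ramification divisor of $\tilde{\pi}$, and note that $2G\equiv 2K_{V}$ while $G\not\equiv K_{V}$, simply because $G$ is effective and $p_{g}(V)=0$; hence $G-K_{V}$ is a nonzero $2$-torsion class, which descends to $S$ exactly as in your last paragraph. Unwinding $K_{V}\equiv\tilde{\pi}^{*}(K_{W}+L)$ and $\tilde{R}\equiv\tilde{\pi}^{*}L$ gives $G-K_{V}\equiv\tilde{\pi}^{*}(A-K_{W})\equiv\tilde{\pi}^{*}\tau$ (note $-\tau\equiv\tau$), so the two constructions agree. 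The difference is where the nontriviality comes from: the paper gets it for free from effectivity plus $p_{g}(V)=0$, with no computation of $\ker\tilde{\pi}^{*}$, no Krull--Schmidt, and no appeal to birational invariance of torsion or to the structure of $\Pic$ of an Enriques surface. You instead prove the stronger, reusable statement that $\tilde{\pi}^{*}\colon\Pic(W)\to\Pic(V)$ is injective whenever the branch divisor $\tilde{B}\equiv 2L$ is a nonzero effective divisor; this is valid (Atiyah's Krull--Schmidt theorem applies to vector bundles on projective varieties, or one can compare line-bundle summands by hand using that a split inclusion of line bundles with nonzero retraction is an isomorphism), and it buys more than the lemma asks: the entire torsion subgroup $\ZZ/2$ of $\Pic(W)$ survives to $\Pic(S)$, and the kernel computation works verbatim for any flat double cover with nonempty branch locus. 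The price is the extra machinery, which the paper's four-line argument entirely avoids.
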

\begin{proof}
If $W$ is birational to an Enriques surface then $2K_W$ can be
written as $2A$ where $A$ is an effective divisor. Thus
$2K_V\equiv\tilde\pi^*(2A)+2\tilde R$, where $\tilde R$ is the
ramification divisor of $\tilde \pi$. So $G=\tilde\pi^*(A)+\tilde R$
is an effective divisor such that $G\sim K_V$ but $G\not\equiv K_V$
because $G$ is effective and $p_g(V)=0$. Since $2G\equiv 2K_V$,
$G-K_V$ is a 2-torsion element, and so $S$ has a 2-torsion element.
\end{proof}

\begin{rmk} \emph{Suppose
$B_0=\substack{\Gamma_{0}\\(3,0)}+\substack{\Gamma_{1}\\(1,-2)}$. By
Lemma \ref{criterion of alpha}, $\bar{E}_{1}\Gamma_{0}=2$ and
$\bar{E}_{1}\Gamma_{1}=1$. So we have $\tilde{\Gamma}_{0}\colon
(5,8)$, $\tilde{\Gamma}_{1}\colon (1,0)$ and
$\tilde{\Gamma}_{0}\tilde{\Gamma}_{1}=4$ on the Enriques surface
$W'$. We have $h^{0}(W',\mathcal{O}_{W'}(\tilde{\Gamma}_{0}))=5$
since $\tilde{\Gamma}_{0}\colon(5,8)$. However, the intersection
number $\tilde{\Gamma}_0\tilde{\Gamma}_1=4$ together with tangency
condition gives a six dimensional condition.}
\end{rmk}

By the results in Section 3 and 4, we have the table of
classification in Introduction.

\medskip

\section{Examples}\label{ex}
There is an example of a minimal surface $S$ of general type with
$p_{g}(S)=q(S)=0$, $K_{S}^{2}=7$ with an involution. Such an example
can be found in Example $4.1$ of \cite{BS}. Since the surface $S$ is
constructed by bidouble cover (i.e. $\mathbb{Z}_{2}^{2}$-cover),
there are three involutions $\gamma_{1},\gamma_{2}$ and $\gamma_{3}$
on $S$. The bicanonical map $\varphi$ is composed with the
involution $\gamma_{1}$ but not with $\gamma_{2}$ and $\gamma_{3}$.
Thus the pair $(S,\gamma_{1})$ has $k=11$ by
Proposition \ref{coro:3.6}, and then $W_{1}$ is rational and $K_{W_{1}}^{2}=-4$ by Theorem
\ref{theorem:k} $(ii)$, where $W_{1}$ is the blowing-up of all the nodes in
$S/\gamma_{1}$. On the other hand, we cannot see directly about $k$,
$K^{2}$ and the Kodaira dimension of the quotients in the case
$(S,\gamma_{2})$ and $(S,\gamma_{3})$. We use the notation of Example
$4.1$ in \cite{BS}, but $P$ denotes $\Sigma$. Moreover, $W_{i}$ comes from the blowing-up at all the
nodes of $\Sigma_{i}:=S/\gamma_{i}$ for $i=1,2,3$.

We now observe that $W_{i}$ is constructed by using a double
covering $T_{i}$ of a rational surface $P$ with a branch divisor
related to $L_{i}$. The surface $P$ is obtained as the blowing-up at
six points on a configuration of lines in $\mathbb{P}^{2}$. The
surface $W_{i}$ is obtained by examining $(-1)$ and $(-2)$-curves on
$T_{i}$ and contracting some of them.

We will now explain this examination in more details for each case.
Firstly, for $i=1$, then $K_{T_{1}}^{2}=-6$ since
$K_{T_{1}}\equiv\pi_{1}^{*}(K_{P}+L_{1})$, where $\pi_{1}\colon
T_{1}\longrightarrow P$ is the double cover. We observe that there
are only two $(-1)$-curves on $T_{1}$ because $S_{3},S_{4}$ are on
the branch locus of $\pi_{1}$. So
$K_{W_{1}}^{2}=K_{\Sigma_{1}}^{2}=-6+2=-4$. On the other hand, we
also observe that there are only seven nodes and four $(-2)$-curves
on $T_{1}$ because $D_{2}D_{3}=7$ and $S_{1}$ and $S_{2}$ do not
contain in $D_{2}+D_{3}$. So $\Sigma_{1}$ has $k=11$ nodes.
Moreover,
$H^{0}(T_{1},\mathcal{O}_{T_{1}}(2K_{T_{1}}))=H^{0}(P,\mathcal{O}_{P}(2K_{P}+2L_{1}))
\oplus H^{0}(P,\mathcal{O}_{P}(2K_{P}+L_{1}))$ since
$2K_{T_{1}}\equiv\pi_{1}^{*}(2K_{P}+2L_{1})$ and
${\pi_{1}}_{*}(\mathcal{O}_{T_{1}})=\mathcal{O}_{P}\oplus
\mathcal{O}_{P}(-L_{1})$. So
$H^{0}(T_{1},\mathcal{O}_{T_{1}}(2K_{T_{1}}))=0$ because
$2K_{P}+2L_{1}=4l-2e_{2}-4e_{4}-2e_{5}-2e_{6}$
 and $2K_{P}+L_{1}=-l+e_{1}+e_{3}-e_{4}$. This means that $T_{1}$ is rational,
 and therefore $W_{1}$ is rational. For the branch divisor $B_{0}$,
 we observe $f_{2}$ and $\Delta_{1}$ in $D_{1}$. Since $f_{2}D_{2}=4$
 and $f_{2}D_{3}=4$, $f_{2}(D_{2}+D_{3})=8$. By Hurwitz's formula,
 $2p_{g}(\Gamma_{0})-2=2(2p_{g}(f_{2})-2)+8$, and so $p_{g}(\Gamma_{0})=3$
 because $f_{2}$ is rational, and moreover $\Gamma_{0}^{2}=0$ because $f_{2}^{2}=0$.
 This means $\Gamma_{0}\colon(3,0)$. Similarly, since $\Delta_{1}D_{2}=1$
 and $\Delta_{1}D_{3}=5$, $\Delta_{1}(D_{2}+D_{3})=6$. By Hurwitz's formula,
 $2p_{g}(\Gamma_{1})-2=2(2p_{g}(\Delta_{1})-2)+6$, and so $p_{g}(\Gamma_{1})=2$
 because $\Delta_{1}$ is rational, and moreover $\Gamma_{1}^{2}=-2$ because
 $\Delta_{1}^{2}=-1$. This means $\Gamma_{1}\colon(2,-2)$,
 thus $B_{0}=\substack{\Gamma_{0}\\(3,0)}+\substack{\Gamma_{1}\\(2,-2)}$.

Secondly, in the case $i=2$, we calculate $K_{T_{2}}^{2}=-6$. We
observe that there are only four $(-1)$-curves on $T_{2}$ because
$S_{1},S_{2},S_{3},S_{4}$ are on the branch locus. So
$K_{W_{2}}^{2}=K_{\Sigma_{2}}^{2}=-6+4=-2$. On the other hand, we
also observe that there are only nine nodes on $T_{2}$ because
$D_{1}D_{3}=9$. So $\Sigma_{2}$ has $k=9$ nodes. Yifan Chen \cite{Chen} shows that
$H^{0}(T_{2},\mathcal{O}_{T_{2}}(2K_{T_{2}}))=1$  and that $W_{2}$ is birational to an Enriques surface.
For the branch
divisor $B_{0}$, we observe $f_{3}$ and $\Delta_{2}$ in $D_{2}$.
Since $f_{3}D_{1}=2$ and $f_{3}D_{3}=6$, $p_{g}(\Gamma_{0})=3$
because $f_{3}$ is rational, and $\Gamma_{0}^{2}=0$ because
$f_{3}^{2}=0$. This means $\Gamma_{0}\colon(3,0)$. Moreover, since
$\Delta_{2}D_{1}=3$ and $\Delta_{2}D_{3}=1$, $p_{g}(\Gamma_{1})=1$
because $\Delta_{2}$ is rational, and $\Gamma_{1}^{2}=-2$ because
$\Delta_{2}^{2}=-1$. This means $\Gamma_{1}\colon(1,-2)$, thus
$B_{0}=\substack{\Gamma_{0}\\(3,0)}+\substack{\Gamma_{1}\\(1,-2)}$.

Lastly, for $i=3$, we get $K_{T_{3}}^{2}=-4$. There are only two
$(-1)$-curves on $T_{3}$ because $S_{1},S_{2}$ are on the branch
locus. So $K_{W_{3}}^{2}=K_{\Sigma_{3}}^{2}=-4+2=-2$. On the other
hand, there are only nine nodes on $T_{3}$ because $D_{1}D_{2}=5$
and $S_{3}$ and $S_{4}$ do not contain in $D_{1}+D_{2}$. So
$\Sigma_{2}$ has $k=9$ nodes. Also,
$H^{0}(T_{3},\mathcal{O}_{T_{3}}(2K_{T_{3}}))=0$ by a similar
argument to the case $i=1$. So $W_{3}$ is rational. For the branch
divisor $B_{0}$, we observe $f_{1},f'_{1}$ and $\Delta_{3}$ in
$D_{3}$. Since $f_{1}D_{1}=4$ and $f_{1}D_{2}=2$,
$p_{g}(\Gamma_{0})=2$ because $f_{1}$ is rational, and
$\Gamma_{0}^{2}=0$ because $f_{1}^{2}=0$. This means
$\Gamma_{0}\colon(2,0)$, and $\Gamma_{1}$ related to $f'_{1}$ is
also of type $(2,0)$. Moreover, since $\Delta_{3}D_{1}=1$ and
$\Delta_{3}D_{2}=3$, $p_{g}(\Gamma_{2})=1$ because $\Delta_{3}$ is
rational, and $\Gamma_{2}^{2}=-2$ because $\Delta_{3}^{2}=-1$. This
means $\Gamma_{2}\colon(1,-2)$, thus
$B_{0}=\substack{\Gamma_{0}\\(2,0)}+\substack{\Gamma_{1}\\(2,0)}+\substack{\Gamma_{2}\\(1,-2)}$.

\medskip

The following table summaries the above computation:
\begin{table}[h]
\centering
\begin{tabular}{|c|c|c|c|c|}
\hline
                   & $k$      &  $K_{W_{i}}^{2}$ & $B_{0}$ & $W_{i}$\\
\hline
$(S,\gamma_{1})$  &  $11$     &  $-4$            & $\substack{\Gamma_{0}\\(3,0)}+\substack{\Gamma_{1}\\(2,-2)}$ & rational\\
\hline
$(S,\gamma_{2})$  &  $9$      &  $-2$            & $\substack{\Gamma_{0}\\(3,0)}+\substack{\Gamma_{1}\\(1,-2)}$ & birational to an Enriques surface \\
\hline
$(S,\gamma_{3})$  &  $9$      &  $-2$            &
$\substack{\Gamma_{0}\\(2,0)}+\substack{\Gamma_{1}\\(2,0)}+\substack{\Gamma_{2}\\(1,-2)}$ & rational\\
\hline
\end{tabular}
\end{table}

\begin{rmkN}\emph{
In the pre-version of the paper, the 3 quotients of Inoue's example were claimed rational surfaces. 
And we raised the question for the existence of a minimal smooth projective surface of general type 
with $p_g=0$ and $K^2=7$ which is a double cover of a surface birational to an Enriques surface or 
a surface of general type. Carlos Rito \cite{Rito} constructed an example with Enriques quotient. 
Later Yifan Chen \cite{Chen} showed that Rito's example is the Inoue's one, and Carlos Rito verified 
that one of quotients is not rational but is an Enriques.}
\end{rmkN}

\medskip

{\em Acknowledgements}. Both authors would like to thank Margarida
Mendes Lopes for sharing her ideas which run through this work. The
simplified proof of Theorem \ref{theorem:k} and Lemma \ref{torsion}
are due to her. And they would like to thank Yifan Chen, Stephen Coughlan,
JongHae Keum, Miles Reid, and Carlos Rito for some useful comments. The authors
thank a referee for valuable comments to modify the original version.

The first named author was partially supported by the Special
Research Grant of Sogang University. Both authors were partially
supported by the World Class University program through the National
Research Foundation of Korea funded by the Ministry of Education,
Science and Technology (R33-2008-000-10101-0).

\bigskip

\begin{small}\end{small}

\end{document}